\documentclass[10pt]{amsart}
\usepackage{graphicx,tikz,amssymb,amsmath,amsthm,amsfonts,mathtools,mathrsfs,hyperref, enumerate, enumitem, adjustbox, wasysym, ytableau, youngtab, tikz-cd, faktor}
\usepackage{xcolor} 

\numberwithin{equation}{section}

\makeatletter

\DeclareRobustCommand{\PolishL}{\text{\fontencoding{T1}\selectfont\symbol{138}}}
\makeatother

\newcommand{\symproduct}{\heartsuit}
\newcommand{\symcoproduct}{\spadesuit}
\newcommand{\extproduct}{\diamondsuit}
\newcommand{\extcoproduct}{\clubsuit}

\newcommand{\flip}{\mathsf{flip}}
\def\im{\operatorname{im}}

\def\Sym{\operatorname{Sym}}
\def\SSYT{\operatorname{SSYT}}

\newcommand{\bwedge}[1]{{\textstyle\bigwedge\!\!^{#1}\, }}

\usetikzlibrary{positioning,patterns}
\usepackage{amssymb,amsmath,amsthm,amsfonts,mathrsfs, faktor}

\def\Z{\mathbb Z}
\def\N{\mathbb N} 
 
\def\C{\mathbb C}

\def\F{\mathbb F}

\def\sl{\mathfrak{sl}}
\def\SL{\mathrm{SL}}
\def\x{\mathbf x}
\def\y{\mathbf y}
\def\z{\mathbf z}

\def\sgn{\mathrm {sgn}}

\def\hmi{\what{\mathcal{M}}_i}
\def\mi{\mathcal{M}_i}

\newcommand{\what}[1]{\smash{\widehat{#1}}}

\def\U{\mathcal{U}}

\def\Sym{\operatorname{Sym}}

\renewcommand{\leq}{\leqslant}
\renewcommand{\le}{\leqslant}
\renewcommand{\geq}{\geqslant}
\renewcommand{\ge}{\geqslant}
\renewcommand{\preceq}{\preccurlyeq}

\newcommand{\E}{\!E}

\newtheorem{Proposition}{Proposition}[section]
\newtheorem{Lemma}[Proposition]{Lemma}

\newtheorem{Theorem}[Proposition]{Theorem} 
 
\newtheorem{proposition}[Proposition]{Proposition}
\newtheorem{lemma}[Proposition]{Lemma}
\newtheorem{corollary}[Proposition]{Corollary}

\newtheorem{remark}[Proposition]{Remark}

\theoremstyle{definition}

\newtheorem{Definition}[Proposition]{Definition}

\newcommand{\qbinom}[2]{\genfrac{[}{]}{0pt}{}{#1}{#2}}
\newcommand{\Qbinom}[2]{\genfrac{[}{]}{0pt}{}{#1}{#2}_q}
\newcommand{\Qinteger}[1]{[#1]_q}

\renewcommand{\wr}{\mathrm{wr}}
\newcommand{\SymG}{\mathfrak{S}}

\title[Filtration of plethystic modules for $\SL_2(\F)$]{A field-independent filtration of plethystic modules for $\SL_2(\F)$ that categorifies a product rule for the Cartan subalgebra of $U_q(\sl_2)$}
\author[\'A.~Gutiérrez, \'A.~L.~Mart\'inez, M.~Szwej, M.~Wildon]
{\'Alvaro Guti\'errez, \'Alvaro L. Mart\'inez, \\ Micha\PolishL\ Szwej, and Mark Wildon}
\address{University of Bristol}
\email[Á.~Gutiérrez]{a.gutierrezcaceres@bristol.ac.uk}
\email[M.~Szwej]{michal.szwej@bristol.ac.uk}
\email[M.~Wildon]{mark.wildon@bristol.ac.uk}
\thanks{ÁG was funded by a University of Bristol Research Training Support Grant.
	MS and MW thank the Heilbronn Institute for Mathematical Research for financial support.
}

\begin{document}
\begin{abstract}
We lift a product rule in the Cartan subalgebra of quantum $\mathfrak{sl}_2$ to a filtration of the plethystic representation $\Delta^{(n,m)}\mathrm{Sym}^d E$ of the affine group scheme of the algebraic group $\mathrm{SL}_2$, where $E$ is the natural representation and $\Delta^{(n,m)}$ the Weyl functor. This is a significant step towards a categorification of quantum $\mathfrak{sl}_2$. Our filtration is an addition to a growing family of field-independent isomorphisms of $\mathrm{SL}_2$ representations that include Hermite reciprocity and the Wronskian isomorphism. It is the first such field-independent result requiring multiple filtration layers. It is proved by combinatorial techniques using the authors' symmetric functions model for Weyl modules.
\end{abstract}
\maketitle
\thispagestyle{empty}

\section{Introduction}
Let $\F$ be an arbitrary field,
let $E$ be the natural representation of $\SL_2(\F)$, and let $\Delta^\lambda$ be the Weyl functor 
(see \S\ref{sec:image}) whose formal character is the Schur
function~$s_\lambda$. The main result
of this paper is a field-independent filtration of the $\SL_2(\F)$-representations $\Delta^{(n,m)}\Sym^d\E$. 

\begin{Theorem}\label{thm:main}
    Let $m, n,d\in\mathbb{N}$ with $m\le n,d$. The $\SL_2(\F)$-plethysm $\Delta^{(n, m)}\Sym^d \E$ has the filtration
\begin{align*}
\Sym_{n+m}\Sym^{d-m}\E &\otimes
    \Delta^{(n-m)}\Sym^{m}\E\\
    \Sym_{n+m}\Sym^{d-m+1}\E
    &\otimes \Delta^{(n-m+1, 1)}\Sym^{m-1}\E\\
    &{\,}~\vdots\\
    \Sym_{n+m}\Sym^{d-2}\E
    &\otimes \Delta^{(n-2, m-2)}\Sym^2\E\\
    \Sym_{n+m}\Sym^{d-1}\E
    &\otimes\Delta^{(n-1, m-1)}\Sym^1\E\,
\end{align*}
  from top layer to bottom layer.
\end{Theorem}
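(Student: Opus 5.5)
I plan to build the filtration concretely inside the symmetric-functions model for Weyl modules mentioned in the abstract. In that model $\Delta^{(n,m)}\Sym^d\E$ is the image of a map from $\Sym_n\Sym^d\E\otimes\Sym_m\Sym^d\E$, or equivalently is spanned by products $f(\x)g(\y)$ of polynomials in two sets of $\Sym^d$-variables modulo the Garnir-type relations. The first step is to fix a basis of $\Delta^{(n,m)}\Sym^d\E$ indexed by semistandard tableaux of shape $(n,m)$ with entries in $\{0,\dots,d\}$, and to check that the two sides have the same dimension. Write $c_j$ for the layer $\Sym_{n+m}\Sym^{d-m+j}\E\otimes\Delta^{(n-m+j,j)}\Sym^{m-j}\E$, for $0\le j\le m-1$. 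Summing $\dim c_j$ over $j$ should give $\dim\Delta^{(n,m)}\Sym^d\E$; at the level of characters this is exactly the quantum product rule on the Cartan subalgebra, specialised at $q$. I take that rule as the numerical input and do not reprove it.

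Next I construct explicit $\SL_2(\F)$-homomorphisms. For each $j$ I want a map
\[
\phi_j\colon \Sym_{n+m}\Sym^{d-m+j}\E\otimes\Delta^{(n-m+j,j)}\Sym^{m-j}\E\to\Delta^{(n,m)}\Sym^d\E,
\]
defined over $\Z$. The natural candidate is multiplication. Take a symmetric tensor of $n+m$ polynomials of degree $d-m+j$ and distribute them over all $n+m$ boxes of the shape $(n,m)$. Take a representative of the Weyl-module factor and place its entries on a subshape $(n-m+j,j)$ of boxes. The $m-j$ leftover boxes of the second row, and the $m-j$ corresponding boxes of the first row, are filled using a Wronskian or determinant factor $(x_1y_2-x_2y_1)$. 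This factor is $\SL_2$-invariant, so it supplies the degree bookkeeping: $(d-m+j)+(m-j)=d$. I then set $F_k=\sum_{j\ge k}\im\phi_j$ and claim that the $F_k$ form the filtration, with $F_k/F_{k+1}\cong c_k$. Because every map is defined integrally by explicit polynomial formulas, field independence follows from working over $\Z$ and base-changing.

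The heart of the proof is to show that $\phi_k$ induces an injection of $c_k$ into $F_k/F_{k+1}$. I would do this with a triangularity argument. Order basis elements of $\Delta^{(n,m)}\Sym^d\E$ by a dominance-type order on tableaux, for example by the number of columns whose two entries are forced to differ in the way the determinant factor produces. Then show that $\phi_k$ applied to a basis element (a multiset of $n+m$ monomials times a semistandard tableau) has a distinct leading tableau. I also need that these leading tableaux are disjoint from those reached by $\phi_j$ for $j>k$, and that together they exhaust the semistandard basis. Once this holds, the dimension count from the first step turns spanning into an isomorphism for every layer. I expect this leading-term analysis to be the main obstacle. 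Straightening a product back into semistandard form in characteristic $p$ can introduce binomial coefficients that vanish. So I would first try to choose the ordering so that every leading coefficient is $\pm1$, perhaps by reducing to the $j=0$ and $j=m-1$ extreme cases and then arguing inductively on $m$ via the layer $\Delta^{(n-m+j,j)}$.
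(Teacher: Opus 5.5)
\textbf{There is a genuine gap: your maps go in the wrong direction, so the method would prove a false statement.}

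You map each layer $c_j$ \emph{into} $\Delta^{(n,m)}\Sym^d E$ and set $F_k=\sum_{j\ge k}\im\phi_j$. Suppose $\phi_k$ induces an isomorphism $c_k\to F_k/F_{k+1}$. Your triangularity step would give this, and so would spanning plus your dimension count. Then $\phi_k$, composed with the inverse of that isomorphism, is an $\SL_2(\F)$-equivariant section of $F_k\to F_k/F_{k+1}$. Hence $F_k=\phi_k(c_k)\oplus F_{k+1}$, and your argument would prove $\Delta^{(n,m)}\Sym^dE\cong\bigoplus_j c_j$ over every field.

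This is false. Take $n=m=d=2$ and $\F$ an infinite field of characteristic $3$. Then $c_0=\F$ and $c_1=\Sym_4\Sym^1E\otimes\bwedge{2}E\cong\Sym_4E$.

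First identify the module. Put $V=\Sym^2E$, so $\dim V=3$. The module $\Delta^{(2,2)}V$ lies in the column-swap-invariant tensors $\Sym_2(\bwedge{2}V)$, because both row factors are symmetric tensors. Both spaces have dimension $6$, so they are equal. Since $\bwedge{2}V\cong V^*\otimes\bwedge{3}V$ and $(\Sym^2E)^*\cong\Sym_2E$, we get $\Delta^{(2,2)}\Sym^2E\cong\Sym_2\Sym_2E$.

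Next count its invariants. Let $e_1,e_2$ be a basis of $E$, and put $f_1=e_1\otimes e_1$, $f_2=e_1\otimes e_2+e_2\otimes e_1$, $f_3=e_2\otimes e_2$. A weight-zero vector of $\Sym_2\Sym_2E$ has the form $v=\alpha(f_1\otimes f_3+f_3\otimes f_1)+\beta f_2\otimes f_2$. The unipotent $e_2\mapsto e_2+te_1$ sends $v$ to
$v+(\alpha+2\beta)\bigl(t(f_1\otimes f_2+f_2\otimes f_1)+2t^2f_1\otimes f_1\bigr)$.
So the invariants of $\Delta^{(2,2)}\Sym^2E$ form a single line.

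On the other hand, $\F\oplus\Sym_4E$ has a $2$-dimensional space of invariants. In characteristic $3$ the symmetrisation of $e_1\otimes e_1\otimes e_2\otimes e_2$ is invariant, because its correction terms carry the coefficients $3$ and $6$.

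So here the top layer $\F$ is a non-split quotient, and no $\phi_0$ of the kind you want exists. Some leading coefficient in your triangularity must vanish modulo $3$, whatever order you choose. This is not the technical binomial-coefficient issue you anticipated; the target statement itself fails.

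\textbf{What is needed instead.} Only the bottom layer is genuinely a submodule. There, your multiply-by-a-determinant map is essentially the inverse of the paper's $\mathcal{M}_{m-1}$. The other layers must be produced as \emph{quotients}, by maps out of $\Delta^{(n,m)}\Sym^dE$.

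That is what the paper does. It uses
$\widehat{\mathcal{M}}_i=(\wr_{2k}^{-1}\otimes1)(\extproduct_{(2^k)}\otimes\psi_{\tilde n i})(\wr_k\otimes1)^{\otimes 2}(\symcoproduct_{k\tilde n}\otimes\symcoproduct_{ki})$.
This divides by a Vandermonde on the first $k=m-i$ columns, where you multiply by one. It kills $U_{nm}$, so it descends to $\Delta^{(n,m)}\Sym^dE$. The filtration is then by successive kernels, $M_{i+1}=\ker(\overline{\mathcal{M}}_i|_{M_i})$.

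After that, only a lower bound on each image is required. The explicit subspace $S_i$, built from $\pi(A_i\otimes B_i)$, lies in $M_i$. Via the surjection $\varphi$, it is mapped onto the embedded copy $\tilde\pi(L_i)$ of the layer. The same dimension count you propose then forces $\im\mathcal{M}_i=\tilde\pi(L_i)$ and $M_m=0$. No complement to any layer is ever constructed.

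Separately, your $\phi_j$ is never pinned down. Raising each of the $2(m-j)$ leftover boxes by degree $m-j$ needs $(m-j)^2$ bracket factors. You would also have to check that the product lies in $\Delta^{(n,m)}\Sym^dE$ and is well defined on $\Delta^{(n-m+j,j)}\Sym^{m-j}E$.
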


The primary motivation for this theorem is that it categorifies
a product rule in the Cartan subalgebra of the quantum group $\U_q(\sl_2)$. We explain this now.
 Let $\mathcal{A} = \Z[q,q^{-1}]$. 
Lusztig's integral form $\U_\mathcal{A}(\sl_2)$ for the Cartan subalgebra
of $\U_q(\sl_2)$ is spanned by the \emph{Lusztig elements}
\[
\qbinom{K;c}{n} = \frac{[K;c][K;c-1]\cdots [K;c-n+1]}{\Qinteger{n}\Qinteger{n-1}\cdots\Qinteger{1}},
\]
where $c \in \N_0$, $n \in \N_0$,
$[K;a] = (q^aK - q^{-a}K^{-1})/(q-q^{-1})$ and $\Qinteger{b} = (q^b - q^{-b})/(q-q^{-1})$ is 
the usual quantum integer. By Theorem~6.2 in the authors' paper \cite{GSMW-Pfaff},
the multiplication rule for the Lusztig elements is
\begin{equation}
    \label{Rel2}
\qbinom{K; c}{n}\qbinom{K; b}{m}=\sum_{i\ge 0} \Qbinom{n-c+b}{i-c}\Qbinom{m-b+c}{i-b}\qbinom{K;i}{n+m},
\end{equation}
where $\Qbinom{n-c+b}{i-c}$ and $\Qbinom{m-b+c}{i-b}$ are the quantum binomial coefficients
defined by 
\[ \Qbinom{n}{k} = \frac{\Qinteger{n}\cdots \Qinteger{n-k+1}}{\Qinteger{k}\ldots \Qinteger{1}}.\]
Formally substituting $q^d$ for $K$, specializing by $b = m$ and $c=n$ and using
the basic identity $\Qbinom{n}{k} = \Qbinom{n}{n-k}$ four times we obtain the $q$-binomial identity
\[
\Qbinom{d+n}{d}\Qbinom{d+m}{d}=\sum_{i\ge 0} \Qbinom{n}{n+m-i}\Qbinom{m}{n+m-i}\Qbinom{d+i}{n+m},
\]
which we interpreted as a form of the $q$-Pfaff--Saalsc{\"u}tz identity and 
proved combinatorially in \cite{GSMW-Pfaff}. Taking the difference of two instances
of this identity we obtain
\begin{align*}
    &\Qbinom{d+n}{d}\Qbinom{d+m}{d} - \Qbinom{d+n+1}{d}\Qbinom{d+m-1}{d} \\
    &\quad =
    \sum_{i\ge 0} \left(\Qbinom{n}{n+m-i}\Qbinom{m}{n+m-i} - \Qbinom{n+1}{n+m-i}\Qbinom{m-1}{n+m-i}\right)\Qbinom{d+i}{n+m}\\
    &\quad =
    \sum_{k\ge 0} \left(\Qbinom{n}{k}\Qbinom{m}{k} - \Qbinom{n+1}{k}\Qbinom{m-1}{k}\right)\Qbinom{d+n+m-k}{n+m}\,.
\end{align*}
By the basic corollary of the Jacobi--Trudi identity in \eqref{eq:twoRowPlethysm}, when $n \ge m$, the left-hand side
is equal to the plethystic product $s_{(n,m)} \circ s_d(q,q^{-1})$.
Similarly the  expression in parentheses in the final line
is $s_{(n-k,m-k)}\circ s_k(q,q^{-1})$. Thus the multiplication rule~\eqref{Rel2} implies the identity
\begin{equation}
    \label{eq:main}
s_{(n,m)}\circ s_d(q,q^{-1}) = 
\sum_{k=1}^{m} 
\Qbinom{d+n+m-k}{n+m}
s_{(n-k,m-k)}\circ s_{k}(q,q^{-1}).
\end{equation}
Interpreting $s_{(n,m)}\circ s_d(q,q^{-1})$ as the character of $\Delta^{(n,m)}\Sym^d \E$,
\smash{$\Qbinom{d+n+m-k}{n+m}$} as the character of $\Sym_{n+m} \Sym^{d-k} \E$ and 
$s_{(n-k,m-k)}\circ s_{k}(q,q^{-1})$ as the character of $\Delta^{(n-k,m-k)}  \Sym^k \E$
(as explained in \S\ref{subsec:plethysmCharacters})
we see that Theorem~\ref{thm:main} is a modular lift of~\eqref{eq:main} that
categorifies~\eqref{Rel2}.

\subsubsection*{A novel categorification}
At the turn of the century, tensor $2$-categories were constructed whose Grothendieck ring is isomorphic to the representation ring of $\U_q(\sl_2)$ \cite{BFK, Stroppel, FKS}. This may be regarded as a  first approximation to the problem of categorifying the quantum group itself. In this setting, the weight spaces of a module are replaced by abelian categories, and the Chevalley generators $E$ and~$F$ act as exact functors between them. Subsequently, an idempotented form of the algebra $\U_q(\sl_2)$ was categorified by Lauda \cite{Lauda}, extending the earlier categorification of the positive part $\U_q^+(\sl_2)$ by Crane and Frenkel \cite{CraneFrenkel} at a root of unity. This lead to the categorification of (idempotented forms of) all quantum groups by Rouquier~\cite{Rouquier} and by Khovanov and Lauda~\cite{KhovanovLauda}. In all of these categorifications, each generator $K$ of the Cartan is replaced with a system of mutually orthogonal idempotents (one for each weight space). None of these earlier categorifications can express~\eqref{Rel2}. Indeed, Theorem~\ref{thm:main} is the first categorification of an instance of the multiplication rule (1.1) requiring a filtration of $\SL_2(\F)$-modules for $\F$ an arbitrary field. As such we believe it is a significant step towards a categorification of $\U_q(\sl_2)$.

\subsubsection*{Modular plethystic isomorphisms}
We emphasise that Theorem~\ref{thm:main} holds over an arbitrary field $\F$.
As such, it provides a filtration of rational representations for the affine
group scheme of the algebraic group $\SL_2$.
It is notable as an addition to a small but growing
family of field-independent isomorphisms
of plethysms, and the \emph{first} that requires a filtration with multiple layers.

The earliest results in this area are \emph{Hermite reciprocity}
\begin{equation}\label{eq:hermite}
\Sym^k\Sym_n\E \cong
\Sym_n\Sym^k\E ,
\end{equation}
and the \emph{Wronskian isomorphism} 
\begin{equation}\label{eq:wr}
\Sym_n\Sym^{k}\E \cong  \bwedge{n}\!\Sym^{n+k-1}\E,
\end{equation}
each proved independently in \cite{AFPRW, McDW}. The latter may be regarded as a modular lift of the
well-known fact that the number of $n$-multisubsets of a set of size $k+1$ is $\binom{n+k}{n}$.
Next in \cite{MW}, the authors proved Theorem~\ref{thm:main}
when $m=1$; this is the very special case when the filtration has only one layer.
Later \cite{hooks} lifted the trinomial revision identity $\binom{M+N}{N}\binom{M+N+d}{M+N} = \binom{N+d}{N}\binom{M+N+d}{M}$ to an isomorphism
\begin{equation}\label{eq:GTL}
\Sym_M\Sym^N\hskip-1pt\E\otimes\Sym_{M+N}\Sym^d\E \cong
\Sym_N\Sym^{d}\E\otimes\Sym_M\Sym^{d+N}\hskip-1pt\E
\end{equation}
and Stanley's hook-content formula (see \cite[Theorem 7.21.2]{StanleyEC2}) for hooks to
\begin{equation}\label{eq:hook}
\Sym_{M}\Sym^{N-1}\E\hskip1pt\otimes\hskip1pt\Sym_{M+N}\Sym^{d-N+1}\E \cong \Delta^{(M+1,1^{N-1})}\Sym^{d}\E\,.
\end{equation}
A very interesting new direction was begun in
\cite{IOT}, where the authors show that
\begin{equation}\label{eq:Kronecker}
(\Sym^{\ell m} \F^{2\ell^2})^{\SL_\ell(\bar\F)\times\SL_\ell(\bar\F)} \cong \Sym_\ell\Sym^m\F^2;
\end{equation}
this is an isomorphism between an invariant ring (related to the Kronecker product) and an
$\SL_2(\F)$-plethysm.

We emphasise the 
jump in complexity that Theorem~\ref{thm:main} represents over these earlier results. 
The techniques developed to prove~\eqref{eq:GTL} and \eqref{eq:hook} are extended in the proof of Theorem~\ref{thm:main}; they are very different from the algebraic geometry approach in \cite{AFPRW} or the basis dependent methods used in \cite{MW}. We expect that 
the new and mainly combinatorial techniques presented in~\S\ref{sec:aux} and~\S\ref{sec:proof}
will be very useful in further proofs of characteristic-free isomorphisms for $\SL_2$.

\section{Preliminaries}\label{sec:preliminaries}
\subsection{Partitions}\label{subsec:partitions}
We follow \cite{StanleyEC2}.
A \emph{partition} $\lambda = (\lambda_1, \lambda_2, \ldots, \lambda_k)$ is a weakly decreasing sequence of positive integers. We say $\ell(\lambda) = k$ is its \emph{length} and $|\lambda| = \lambda_1+\cdots+\lambda_k$ its \emph{size}. 
By convention, we let $\lambda_L = 0$ if $L > \ell(\lambda)$.
We write exponents for repeated parts: for instance~$(7,4^2,3,1^3) = (7,4,4,3,1,1,1)$. In this paper, partitions of two parts are often the indices of maps, in which case we sometimes abbreviate $f_{(a,b)}$ by $f_{ab}$.

The \emph{sum} of partitions $\lambda+\mu$ is entry-wise, for instance~$(3,2,1)+(4,3,3,2) = (7,5,4,2)$. The \emph{union} of partitions $\lambda\sqcup\mu$ is the sorted list of the multiset union of their parts, for instance~$(3,2,1)\sqcup(4,3,3,2) = (4,3^3,2^2,1)$.

The \emph{Young diagram} of a partition $\lambda$ is the set 
\[
[\lambda] = \{(i,j)\in\Z^2\mid 1\le i\le \ell(\lambda),\, 1\le j\le \lambda_i\}
\]
which we represent in the English convention as a top-left justified array of boxes,
\[
\ytableausetup{smalltableaux, centertableaux}
[(7,4,4,2)] = \ydiagram{7,4,4,2}.
\]
The \emph{transpose} $\lambda'$ of a partition $\lambda$ is the partition whose Young diagram is $\{(i,j)\mid (j,i)\in[\lambda]\}$.
We write $\mu\subseteq\lambda$ if $[\mu]\subseteq[\lambda]$. In that case, the \emph{skew partition} $\lambda/\mu$ is the formal pair $(\lambda,\mu)$, and the Young diagram of the skew partition is the set $[\lambda/\mu]= [\lambda]\setminus[\mu]$. Given $n, d \in \N_0$ let $L(n,d) = \{\lambda\in\mathrm{Par} \mid 
    \lambda_1\le d,~\ell(\lambda)\le n\}$ be the set of partitions fitting in an $n\times d$ box.

A \emph{tableau} is a map $T : Y \to [n]$, where $Y$ is the Young diagram of a partition~$\lambda$ or a skew partition $\lambda/\mu$. We say $\lambda$ or $\lambda/\mu$ is the \emph{shape} of the tableau and $[n]$ is the \emph{alphabet}.
The \emph{weight} of a tableau $T$ is the tuple $w(T) = (w_1, w_2, \ldots, w_n)$ where $w_i$ counts the number of
boxes in $[\lambda]$ or $[\lambda/\mu]$ sent to $i$ by $T$.
A tableau is \emph{semistandard} if
\[
T(i,j) \le T(i, j+1)
\quad\text{and}\quad
T(i,j) < T(i+1, j)
\]
wherever the function is defined. We represent a tableau by placing $T(i,j)$ inside the box $(i,j)$ of the Young diagram; then a tableau is semistandard if it is weakly increasing when rows are read left-to-right and strictly increasing 
when columns are read top-to-bottom.

\subsection{Alternating and symmetric polynomials}
Our general reference for symmetric polynomials and symmetric functions
is \cite[Ch.~7]{StanleyEC2}.
Let $\x_n = (x_1, \ldots, x_n)$.
Let the symmetric group $\SymG_n$ act on the polynomial ring $\F[\x_n]$ by permuting the variables.
A polynomial $p(\x_n)$ is \emph{alternating} if $\sigma \cdot p(\x_n) = -p(\x_n)$ for all transpositions $\sigma \in \SymG_n$ and $p(\ldots, x, x, \ldots) = 0$ whenever two entries coincide. (Note the second condition is needed
if $\F$ has characteristic two.)
A polynomial is \emph{symmetric} if it is invariant under the $\SymG_n$ action.

Let $\rho_n = (n-1, n-2, \ldots, 1,0)$ be the staircase partition. As $\lambda$ ranges over all partitions of length at most $n$, the determinants 
\[
a_{\lambda + \rho_n}(\x_n) = \det(x_j^{\lambda_i+(\rho_n)_i}) = \det(x_j^{\lambda_i+n-i})
\]
form a basis of the $\F$-vector space $A[\x_n]$ of alternating polynomials. 
When $\lambda = \varnothing$, then $a_{\rho_n}(\x_n) = \prod_{i<j} (x_i-x_j)$ is the Vandermonde determinant.
The $\F$-vector space $\Lambda[\x_n]$ of symmetric polynomials satisfies 
\[
a_{\rho_n}(\x_n) \, \Lambda[\x_n] = A[\x_n].
\]

\subsection{Schur polynomials and Littlewood--Richardson coefficients}
The corresponding basis of $\Lambda[\x_n]$ is that of \emph{Schur polynomials},
\[
s_\lambda(\x_n) = \frac{a_{\lambda + \rho_n}(\x_n)}{a_{\rho_n}(\x_n)}.
\]
A classic result is that 
\[
s_\lambda(\x_n) = \sum_{T\in\SSYT_n(\lambda)} \x_n^{w(T)},
\]
where $\SSYT_n(\lambda)$ is the set of semistandard Young tableaux of shape $\lambda$ in the alphabet $[n]$, and where  $\x_n^{(\alpha_1, \ldots, \alpha_n)} = x_1^{\alpha_1}\cdots x_n^{\alpha_n}$.
More generally, the \emph{skew Schur polynomials} are defined as
\[
s_{\lambda/\mu}(\x_n) = \sum_{T\in\SSYT_n(\lambda/\mu)} \x_n^{w(T)}
\]
for skew partitions $\lambda/\mu$.

The \emph{Littlewood--Richardson coefficients} are the structure constants $c_{\mu\nu}^\lambda$ of the product of Schur polynomials,
\[
s_\mu(\x_n)s_\nu(\x_n) = \sum_{\lambda} c_{\mu\nu}^\lambda s_\lambda(\x_n).
\]
They are also the constants expressing skew Schur polynomials in the Schur basis,
\[
s_{\lambda/\mu}(\x_n) = \sum_{\nu} c_{\mu\nu}^\lambda s_\nu(\x_n).
\]
The \emph{reverse reading word} $\mathrm{word}(T)$ of a tableau $T$ is the sequence of its entries obtained by reading rows right-to-left, beginning with the topmost row and finishing with the bottom-most. A sequence $(t_1, \ldots, t_L)$ is said to be a \emph{ballot sequence} if at every prefix $(t_1, \ldots, t_k)$ there are at least as many instances of $i$ as of $i+1$, for every $i=1,\ldots,n-1$. Defining
\[ \mathrm{LR}_{\mu,\nu}^\lambda(n) = 
\{
T\in\SSYT_n(\lambda/\mu) \mid 
\text{$w(T)=\nu$ and $\mathrm{word}(T)$ is a ballot sequence}
\},
\]
we have $c_{\mu\nu}^\lambda = \# \mathrm{LR}_{\mu,\nu}^\lambda(n)$.
The elements of $\mathrm{LR}_{\mu,\nu}^\lambda(n)$ are called
\emph{Littlewood--Richardson tableaux}.

\subsection{Tensor spaces}\label{sec:tensor spaces}
Given an $\F$-vector space $V$ let
\[
\bwedge{n}V = \faktor{V^{\otimes n}}{( \cdots \otimes w \otimes w \otimes  \cdots )}
\]
be its \emph{exterior power}. Note this definition is correct even when $\F$
has characteristic~$2$, and that setting $w = u+v$ gives
the relation $\cdots u \otimes v \cdots + \cdots v \otimes u \cdots$,
as expected for the exterior power.
There are two symmetric power functors, the \emph{upper} and \emph{lower symmetric powers}
\[
\Sym^nV = \faktor{V^{\otimes n}}{(\cdots u\otimes v\cdots - \cdots v\otimes u\cdots)}
\quad
\text{and}
\quad
\Sym_nV = (V^{\otimes n})^{\SymG_n},
\]
where the symmetric group $\SymG_n$ acts by permuting tensor factors.
Typically $\Sym^nV$ and $\Sym_nV$ are not  isomorphic over $\F$ 
as representations of $\mathrm{GL}(V)$, but they are always isomorphic working over $\C$.
\begin{remark}\label{rem:lower-exterior}
    Analogously, one could consider the \emph{lower exterior power}
    defined as the invariants under the \emph{signed} action of $\mathfrak{S}_n$. Here, in contrast to the symmetric powers, a routine computation shows that it is $\mathrm{GL}(V)$-isomorphic to the exterior power defined above, for any field $\F$. Throughout the paper, we write $\bwedge{n} V$ for both cases, as the construction will be usually clear from the context.
\end{remark}

Throughout this paper we work with many-fold tensor products of representations. We introduce the following shorthand notation for convenience.
\begin{Definition}
    Let $V$ be an $\F$-vector space.
    Given a partition $\lambda$, let
    \[
    V^{\otimes\lambda} = \bigotimes_{(i,j)\in[\lambda]} V(i,j),
    \]
    where $V(i,j)$ is an isomorphic copy of $V$.
    Let 
    \[
    \Sym_\lambda\! V = \Sym_{\lambda_1}\! V\otimes \dots\otimes\Sym_{\lambda_{\ell(\lambda)}}\! V
    \]
    and
    \[
    \bwedge{\lambda}\hskip-1pt V = \bwedge{\lambda_1'}\hskip-1ptV\otimes\dots\otimes\bwedge{\lambda'_{\lambda_1}}\hskip-1ptV\,.
    \]
\end{Definition}

We introduce a pictorial way of representing the three objects above: we represent $V^{\otimes\lambda}$ by the Young diagram of~$\lambda$, the space $\Sym_\lambda V$ by a row tabloid of shape~$\lambda$, and 
$\bwedge{\lambda}V$ by a column tabloid, in each case filled with the coordinates of the boxes.
For instance, for $\lambda=(5,3,3)$:
\[
V^{\otimes\lambda} =
\ytableausetup{centertableaux,boxsize=.395cm}
{\tiny\ytableaushort{{11}{12}{13}{14}{15},{21}{22}{23},{31}{32}{33}}},
\quad
\Sym_\lambda\! V = 
\begin{tikzpicture}[x=.4cm,y=.4cm,baseline=.5cm]
    \draw 
        (0,3) rectangle ++ (5,-1)
        (0,2) rectangle ++ (3,-1)
        (0,1) rectangle ++ (3,-1);
    \foreach\i in {1,...,5}{
    \node (1\i) at (\i-.5,2.5) {\tiny 1\i};
    }
    \foreach\i in {1,...,3}{
    \node (2\i) at (\i-.5,1.5) {\tiny 2\i};
    \node (3\i) at (\i-.5,0.5) {\tiny 3\i};
    }
\end{tikzpicture},
\quad\text{and}\quad
\bwedge{\lambda}\hskip-1pt V = 
\begin{tikzpicture}[x=.4cm,y=.4cm,baseline=.5cm]
    \draw 
        (0,3) rectangle ++ (1,-3)
        (1,3) rectangle ++ (1,-3)
        (2,3) rectangle ++ (1,-3)
        (3,3) rectangle ++ (1,-1)
        (4,3) rectangle ++ (1,-1);
        \foreach\i in {1,...,5}{
    \node (1\i) at (\i-.5,2.5) {\tiny 1\i};
    }
    \foreach\i in {1,...,3}{
    \node (2\i) at (\i-.5,1.5) {\tiny 2\i};
    \node (3\i) at (\i-.5,0.5) {\tiny 3\i};
    }
\end{tikzpicture}.
\]
\subsection{Product and coproduct}\label{subsec:productCoproduct}
Following~\cite[\S1.1.1]{Wey03}, there is a product $\symproduct$ and coproduct $\symcoproduct$ for symmetric powers
\[\begin{tikzcd}
	{\Sym_{\lambda}\!V} && {\Sym_{|\lambda|}\!V}
	\arrow["{\symproduct_{\lambda}}", shift left, from=1-1, to=1-3]
	\arrow["{\symcoproduct_{\lambda}}", shift left, from=1-3, to=1-1]
\end{tikzcd}\!\!\!,\]
as well as a product $\extproduct$ and coproduct $\extcoproduct$ for exterior spaces,
\[\begin{tikzcd}
	{\bwedge{\lambda}\hskip-1ptV} && {\bwedge{|\lambda|}V}
	\arrow["{\extproduct_{\lambda}}", shift left, from=1-1, to=1-3]
	\arrow["{\extcoproduct_{\lambda}}", shift left, from=1-3, to=1-1]
\end{tikzcd}\!\!.\]
We drop the subscripts of the maps when they are understood from context.

\begin{remark}\label{remark:ExtProd}
    Let $\lambda = (\lambda_1,\ldots,\lambda_\ell)$ be a partition of length $\ell$ and size $M$.
    The products are best understood as elements of the group algebra of the symmetric group $\SymG_M$
    which acts on $V^{\otimes M}$ by permuting tensor factors. The action extends linearly to $\F\SymG_M$.
 The product $\symproduct$ can be identified with the group algebra element
    \[
    \symproduct_{\lambda} = \sum_{\sigma\in\mathrm{Shuff}(\lambda)} \sigma
    \]
    and $\extproduct$, in light of Remark~\ref{rem:lower-exterior}, with the group algebra element
    \[
    \extproduct_{\lambda'} = \sum_{\sigma\in\mathrm{Shuff}(\lambda)} \sgn(\sigma) \sigma,
    \]
    where $\mathrm{Shuff}(\lambda)$ is a set of minimal length coset representatives of the quotient by the Young subgroup, $\SymG_{M}/(\SymG_{\lambda_1}\times\cdots\times \SymG_{\lambda_\ell})$. 
    These representatives are called \emph{shuffles}, and in the one-line notation they are the permutations of $M$ such that each of the subsets $\{1,\ldots,\lambda_1\}, \{\lambda_1+1, \ldots, \lambda_1+\lambda_2\}, \ldots$ appears in linear order \cite[Ch.~2, Exercise 4]{BjornerBrenti}.
    As elements of the group algebra, $\symcoproduct$ and (by Remark~\ref{rem:lower-exterior}) $\extcoproduct$ are simply the identity.
\end{remark}

\subsection{Weyl modules as images}\label{sec:image}
For the construction of Weyl modules we follow \cite[p.~43]{Wey03} or \cite{McDowell25};
the latter construction is equivalent but more elementary and avoids divided symmetric powers.
Again let $V$ be an $\F$-vector space.
The coproducts
\[
\symcoproduct_{(1^{\lambda_j})} : 
\Sym_{\lambda_j}\! V\hookrightarrow V(j, 1)\otimes\dots\otimes V(j, \lambda_j)
\]
induce a map
\(
\symcoproduct^\lambda := \symcoproduct_{(1^{\lambda_1})}\otimes\cdots\otimes\,\symcoproduct_{(1^{\lambda_{\ell(\lambda)}})} :
\Sym_\lambda\! V \hookrightarrow V^{\otimes \lambda}.
\) 
Similarly, the products
\begin{align*}
    \extproduct_{(\lambda'_i)} : 
    V(1, i)\otimes\dots\otimes V(\lambda'_i, i)&\twoheadrightarrow \bwedge{\lambda'_i}\hskip-1pt V\\
    v_1\otimes\cdots\otimes v_{\lambda'_i} &\mapsto 
    v_1\wedge\cdots\wedge v_{\lambda'_i}
\end{align*}
induce a map
\(
\extproduct^\lambda = \extproduct_{(\lambda'_1)}\otimes\cdots\otimes\extproduct_{(\lambda'_{\lambda_1})} : V^{\otimes\lambda}\twoheadrightarrow\bwedge{\lambda}\hskip-1pt V.
\)

\begin{Definition}\label{def:weyl-as-image}
    The \emph{Weyl module} $\Delta^{\lambda}V$ is defined to be the image of the composition $\psi_\lambda=\extproduct^\lambda\circ\symcoproduct^\lambda$:
    \begin{equation}
        \label{eq:psi lam}
    \Delta^\lambda V = 
    \im(\extproduct^{\lambda}\circ\symcoproduct^{\lambda} :
    \Sym_\lambda\! V\xrightarrow{\psi_\lambda} \bwedge{\lambda}\hskip-1pt V).
    \end{equation}
\end{Definition}

Pictorially, $\psi_\lambda$ is the composite
\[
\psi_\lambda : 
\begin{tikzpicture}[x=.4cm,y=.4cm,baseline=.5cm]
    \draw 
        (0,3) rectangle ++ (5,-1)
        (0,2) rectangle ++ (3,-1)
        (0,1) rectangle ++ (3,-1);
    \foreach\i in {1,...,5}{
    \node (1\i) at (\i-.5,2.5) {\tiny 1\i};
    }
    \foreach\i in {1,...,3}{
    \node (2\i) at (\i-.5,1.5) {\tiny 2\i};
    \node (3\i) at (\i-.5,0.5) {\tiny 3\i};
    }
\end{tikzpicture}
\hookrightarrow
{\tiny\ytableaushort{{11}{12}{13}{14}{15},{21}{22}{23},{31}{32}{33}}}
\twoheadrightarrow
\begin{tikzpicture}[x=.4cm,y=.4cm,baseline=.5cm]
    \draw 
        (0,3) rectangle ++ (1,-3)
        (1,3) rectangle ++ (1,-3)
        (2,3) rectangle ++ (1,-3)
        (3,3) rectangle ++ (1,-1)
        (4,3) rectangle ++ (1,-1);
        \foreach\i in {1,...,5}{
    \node (1\i) at (\i-.5,2.5) {\tiny 1\i};
    }
    \foreach\i in {1,...,3}{
    \node (2\i) at (\i-.5,1.5) {\tiny 2\i};
    \node (3\i) at (\i-.5,0.5) {\tiny 3\i};
    }
\end{tikzpicture}.
\]
Since the construction is functorial, this defines $\Delta^\lambda V$
as a representation of $\mathrm{GL}(V)$, and if $V$ is a representation of
$\SL_2(\F)$, by restriction we see that $\Delta^\lambda V$ is a representation
of $\SL_2(\F)$.

\subsection{The characters of Weyl modules and plethysm}\label{subsec:plethysmCharacters}
The formal character of the Weyl module $\Delta^\lambda V$
as a representation of $\mathrm{GL}(V)$ is the Schur polynomial
$s_\lambda(\x_{d+1}) = s_\lambda(x_1, \ldots, x_{d+1})$, where $d+1=\dim V$. 
Recall that~$E$ is the natural $2$-dimensional representation of $\SL_2(\F)$.
If $V$ is the representation $\Sym^d \E$ of $\SL_2(\F)$
then the eigenvalues of the diagonal matrix $\left( \begin{smallmatrix} q & 0 \\ 0 & q^{-1} \end{smallmatrix}\right) \in \SL_2(\F)$ 
acting on $V$ are $q^d, q^{d-2}, \ldots, q^{-d}$. Thus,
the \emph{$q$-character} of $\Sym^d \E$ is
\begin{equation} q^d + q^{d-2} + \cdots + q^{-d} = [d+1]_q\,.\label{eq:qCharacterSym} \end{equation}
Moreover, the character of $\Delta^\lambda \Sym^d \E$
on this matrix is obtained by substituting these eigenvalues for the formal variables $x_1, x_2,\ldots$,
and so the $q$-character of $\Delta^\lambda \Sym^d \E$ is
\begin{equation}\label{eq:qPlethysm} 
s_\lambda(q^d, q^{d-2}, \ldots, q^{-d}) = s_\lambda \circ s_d(q,q^{-1}) \end{equation}
where the equality 
follows from $s_d(q,q^{-1}) = q^d + q^{d-2} + \cdots + q^{-d}$
and the definition of the plethysm product by monomial substitution
(see \cite[Theorem 7]{LoehrRemmel}).

\begin{remark}
More generally, composition of Weyl functors $\Delta^\lambda$ corresponds
to the plethysm product on Schur functions: see 
\cite[page 448]{StanleyEC2},
or \cite[160, (7.3)]{Macdonald} for an even more general result on polynomial functors.
The simplified treatment in this subsection suffices for our purposes.
\end{remark}

As a corollary of~\eqref{eq:qPlethysm} we prove
the result relating $q$-binomial coefficients to characters
of two-row Weyl modules used in the derivation of~\eqref{Rel2}.
Let $d, m, n \in \N_0$ with $n \ge m$. It follows from the specialism
$h_n(1,q,\ldots, q^{d})$ of the complete homogeneous symmetric function
of degree $n$ in \cite[Ch.~1, \S 2, Example 3]{Macdonald} that
$\Qbinom{d+n}{d} = h_n(q^d,\ldots, q^{-d})$. Hence
\begin{align} \Qbinom{d+n}{d}\Qbinom{d+m}{d} &- \Qbinom{d+n+1}{d}\Qbinom{d+m-1}{d} \notag \\
&\ = (h_n h_m - h_{n+1}h_{m-1})(q^d, q^{d-2}, \ldots, q^{-d}) \notag \\
&\ = s_{(n,m)}(q^d, q^{d-2}, \ldots, q^{-d}) \notag \\
&\ = s_{(n,m)} \circ s_d(q,q^{-1}) \label{eq:twoRowPlethysm} \end{align}
where we used the Jacobi--Trudi identity and then~\eqref{eq:qPlethysm}.
See \cite[\S 6, (3)]{GutKrat} for other applications of this identity.

\subsection{Weyl modules as vector spaces of polynomials}
\subsubsection*{One-row and one-column Weyl modules}
    In \cite{hooks} we constructed the isomorphism of $\SL_2(\F)$-modules
    \[
    \Sym_n\Sym^d\E \cong \Lambda_{\le d}[\x_n],
    \]
    where the right-hand side is the vector space of symmetric polynomials in $\x_n = (x_1, \ldots, x_n)$ with coefficients in $\F$ and of degree at most $d$ \emph{in each variable separately}. 
    An $\F$-basis of $\Lambda_{\le d}[\x_n]$ is given by
    \[
    \{
    s_\lambda(\x_n) \mid
    \lambda\in L(n,d)
    \},
    \]
    where $s_\lambda$ is a Schur polynomial and, as defined in \S\ref{subsec:partitions}, 
    $L(n,d) = \{\lambda\in\mathrm{Par} \mid 
    \lambda_1\le d,~\ell(\lambda)\le n\}$ is the set of partitions fitting in an $n\times d$ box \cite{GMSW-FPSAC}. The group $\SL_2(\F)$ acts on $\Lambda_{\leq d}[\x_n]$ via
\begin{equation}\label{eq:SL2action}
\left(\begin{matrix}
	\alpha&\beta\\\gamma&\delta
\end{matrix}\right) f(x_1,\ldots,x_n)=\prod_{i=1}^n(\beta x_i+\delta)^d\cdot f\!\left(
    \frac{\alpha x_1+\gamma}{\beta x_1+\delta},
    \ldots,
    \frac{\alpha x_n+\gamma}{\beta x_n+\delta}
    \right).
\end{equation}
To keep track of the names of the variables and their maximum degree, we have the following pictorial representation:
\[
\Sym_n\Sym^d\E =
\Lambda_{\le d}[x_1, \ldots, x_n] = 
        \begin{tikzpicture}[x=.45cm,y=.45cm,baseline=.15cm]
        \draw 
            (0,1) rectangle ++ (4,-1);
            \node(13) at (0.5, .5){\small$x_{1}$};
            \node(13) at (1.5, .5){\small$x_{2}$};
            \node(14) at (2.5, .5){\small$\cdots$};
            \node(14) at (3.5, .5){\small$x_{n}$};
        \end{tikzpicture}_{d}\,.
\]

    \begin{remark}\label{rem:sym-coprod-schur}
        The symmetric
        coproduct $\symcoproduct$ becomes the usual coproduct inherited from the (plethystic) Hopf algebra structure of $\Lambda[\x]$ (see \cite[\S5.1]{ALRS}).
        On the Schur basis,
        \begin{equation}
            \label{eq:Schur coproduct}
        \symcoproduct_{ab}\, s_\lambda(\x_a,\y_b) = \sum_{\mu\subseteq\lambda} s_\mu(\x_a)\otimes s_{\lambda/\mu}(\y_b)\,.
        \end{equation}
        Suppose $\lambda\in L(a+b,k)$. Note that $s_\mu(\x_a)\neq0$ requires $\ell(\mu)\leq a$, so $\mu\in L(a,k)$. Similarly, for $s_{\lambda/\mu}(\y_b)\neq0$, each column of the skew partition $\lambda/\mu$ has to be of height at most $b$, equivalently $\mu\subseteq\lambda\subseteq\mu\sqcup(k^b)$, thus we may rewrite the above as
        \begin{equation}
            \label{eq:Schur coproduct 2}
        \symcoproduct_{ab}\, s_\lambda = \sum_{\substack{\mu\in L(a,k)\\\mu\subseteq\lambda\subseteq\mu\sqcup(k^b)}} s_\mu\otimes s_{\lambda/\mu}\,.
        \end{equation}
    \end{remark}
    
A single-column Weyl module $\bwedge{n}\hskip-1pt\Sym^d\E$ is identified with the $\F$-vector space $A_{\le d}[\x_n]$ of alternating polynomials in $n$ variables, each of degree at most $d$. We have a similar pictorial representation:
\[
\bwedge{n}\Sym^d\E =
A_{\le d}[x_1, \ldots, x_n] = 
        \begin{tikzpicture}[x=.45cm,y=.45cm,baseline=-2em,rotate=-90]
        \draw 
            (0,1) rectangle ++ (3,-1);
            \node(13) at (0.5, .5){\small$x_{1}$};
            \node(14) at (1.5, .5){\raisebox{.5em}{\small$\vdots$}};
            \node(14) at (2.5, .5){\small$x_{n}$};
        \end{tikzpicture}_{d}.
\]
A basis of $\bwedge{n}\hskip-1pt\Sym^d \E$ is therefore $\{a_{\lambda+\rho_n}(\x_n)\mid\lambda\in L(n, d-n+1)\}$.

\begin{remark}\label{rmk:ext-product}
    The exterior product $\extproduct_{(k,\ell)'}$ also admits a convenient description on the basis above. For $\lambda\in L(k,d-k+1)$ and $\mu\in L(\ell,d-\ell+1)$,
    \begin{equation*}
        \extproduct_{(k,\ell)'}(a_{\lambda+\rho_k}(\x_k)\otimes a_{\mu+\rho_\ell}(\y_\ell))=\sgn_{k\ell}(\lambda,\mu)a_{(\lambda+\rho_k)\sqcup(\mu+\rho_\ell)}(\x_k,\y_\ell),
    \end{equation*}
    with
    \begin{equation*}
        \sgn_{k\ell}(\lambda,\mu)
        =
        \begin{cases}
            \sgn(\tau)&\text{if the parts of }(\lambda+\rho_k)\sqcup(\mu+\rho_\ell)\text{ are distinct,}\\
            0&\text{otherwise,}
        \end{cases}
    \end{equation*}
    where $\tau$ is the permutation sorting the concatenated sequence of parts into decreasing order. Explicitly,
    \[
    \sgn(\tau)=(-1)^{\#\mathrm{inv}(\lambda_1+k-1,\,\lambda_2+k-2,\,\ldots,\,\lambda_k,\,\mu_1+\ell-1,\,\mu_2+\ell-2,\,\ldots,\,\mu_\ell)}.
    \]
\end{remark}

\subsubsection*{Wronskian and the (co)products}
The interpretation of Weyl modules as $\F$-vector spaces of polynomials becomes a useful setting to study the field-independent morphisms \eqref{eq:wr}--\eqref{eq:hook}. For instance the Wronskian isomorphism~\eqref{eq:wr} is realised as
\begin{align}\label{eq:wr as poly}
    \wr_n : \Lambda_{\le k} [\x_n] &\to A_{\le n+k-1}[\x_n]\notag\\
    p(\x_n) &\mapsto a_{\rho_n}(\x_n)\,p(\x_n).
\end{align}
See~\cite{McDW} for the original map and \cite{Grinberg, hooks} for the interpretation above.

The following computation in the group algebra is useful later. Write $\x_n^{\alpha}$ for the operator that multiplies a polynomial by $x_1^{\alpha_1}\cdots x_{n}^{\alpha_n}$. Recall from \S\ref{subsec:productCoproduct}
that $\extproduct$ is the product for exterior powers, $\symcoproduct$ is the coproduct for symmetric powers, and $\psi$ is the composition from Definition~\ref{def:weyl-as-image}.
\begin{lemma}\label{lem:commute}
    $\extproduct_{(2^k)} \wr_k^{\otimes2} = \extproduct_{kk}\x_k^{\rho_k}\y_k^{\rho_k}\psi_{kk} $.
\end{lemma}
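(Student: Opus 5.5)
The plan is to compute both sides as explicit operators on polynomials and show that each equals the full signed symmetrization over $\SymG_{2k}$ applied to the same polynomial. Both sides are linear maps
\[
\Lambda_{\le d}[\x_k]\otimes\Lambda_{\le d}[\y_k]\;\longrightarrow\; A_{\le d+k-1}[\x_k,\y_k].
\]
We view $V^{\otimes 2k}$, with $V=\Sym^{D}\E$, as the space of polynomials in the $2k$ variables $x_1,\dots,x_k,y_1,\dots,y_k$, each of degree at most $D$. In this model $\SymG_{2k}$ acts by permuting the variables. By Remark~\ref{remark:ExtProd}, every product $\extproduct$ is a signed sum over shuffles, and every coproduct $\symcoproduct$ is the identity. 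The monomial operators $\x_k^{\rho_k}$ and $\y_k^{\rho_k}$ are only linear maps that raise the degree bound from $d$ to $d+k-1$; they are not $\SL_2(\F)$-equivariant, but only an equality of linear maps is claimed. Throughout, write $\mathrm{Alt}_G=\sum_{\sigma\in G}\sgn(\sigma)\,\sigma$ for a subgroup $G\le\SymG_{2k}$.

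\emph{Left-hand side.} For a symmetric polynomial $p(\x_k)$ we have
\[
\wr_k(p)=a_{\rho_k}(\x_k)\,p=\sum_{\sigma\in\SymG_k}\sgn(\sigma)\,\sigma(\x_k^{\rho_k})\,p=\mathrm{Alt}_{\SymG_k}\bigl(\x_k^{\rho_k}p\bigr),
\]
where the last step uses $\sigma p=p$. Hence
\[
\wr_k^{\otimes 2}=\mathrm{Alt}_{\SymG_k\times\SymG_k}\circ\x_k^{\rho_k}\y_k^{\rho_k}
\]
on $\Lambda[\x_k]\otimes\Lambda[\y_k]$. Next, $\extproduct_{(2^k)}$ is the product $\bwedge{k}\otimes\bwedge{k}\to\bwedge{2k}$, namely $\sum_{\sigma\in\mathrm{Shuff}((k,k))}\sgn(\sigma)\sigma$. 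Every element of $\SymG_{2k}$ factors uniquely as a shuffle times an element of the Young subgroup, and signs multiply. Therefore the left-hand side equals $\mathrm{Alt}_{\SymG_{2k}}\circ\x_k^{\rho_k}\y_k^{\rho_k}$.

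\emph{Right-hand side.} Here $\psi_{kk}=\extproduct^{(k,k)}\circ\symcoproduct^{(k,k)}$. The coproduct is the inclusion, and $\extproduct^{(k,k)}$ antisymmetrizes each of the $k$ columns $\{x_i,y_i\}$, so $\psi_{kk}=\mathrm{Alt}_{\SymG_2^{\,k}}$, where $\SymG_2^{\,k}$ is generated by the transpositions $x_i\leftrightarrow y_i$. The key observation is that the monomial $\x_k^{\rho_k}\y_k^{\rho_k}=\prod_i (x_iy_i)^{k-i}$ is invariant under every transposition $x_i\leftrightarrow y_i$. Consequently, multiplication by this monomial commutes with $\mathrm{Alt}_{\SymG_2^{\,k}}$. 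Finally, $\extproduct_{kk}$ is the signed shuffle sum for the Young subgroup $\SymG_2^{\,k}$, which is the product $(\bwedge{2})^{\otimes k}\to\bwedge{2k}$. Combining with $\mathrm{Alt}_{\SymG_2^{\,k}}$ as in the left-hand case, the right-hand side also equals $\mathrm{Alt}_{\SymG_{2k}}\circ\x_k^{\rho_k}\y_k^{\rho_k}$, which proves the lemma.

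The main obstacle is bookkeeping rather than mathematics. First, one must match the column and row indexing conventions: $\extproduct_{\lambda'}$ is indexed by a transpose, so $(2^k)$ corresponds to two columns of length $k$ and $(k,k)$ to $k$ columns of length $2$. Second, one must identify the tensor positions of $\bwedge{(k,k)}V$ with the variables consistently, so that the column transpositions really are $x_i\leftrightarrow y_i$. Third, the identification of $\extproduct$ with signed shuffle sums must hold in every characteristic; this is what Remark~\ref{rem:lower-exterior} provides via lower exterior powers. I would check the first two points carefully on the pictures in \S\ref{sec:image} before writing the group-algebra factorization.
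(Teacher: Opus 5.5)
Your proposal is correct and follows essentially the same route as the paper. Both sides are reduced to $\sum_{\pi\in\SymG_{2k}}\sgn(\pi)\,\pi\circ\x_k^{\rho_k}\y_k^{\rho_k}$: on the left via the factorization of $\SymG_{2k}$ into shuffles times $\SymG_k\times\SymG_k$, and on the right via the invariance of $\x_k^{\rho_k}\y_k^{\rho_k}$ under the column transpositions $x_i\leftrightarrow y_i$ together with the shuffle factorization over $\SymG_2^{\,k}$ (your explicit use of the symmetry of the input to write $a_{\rho_k}p=\sum_{\sigma\in\SymG_k}\sgn(\sigma)\,\sigma(\x_k^{\rho_k}p)$ is a step the paper leaves implicit).
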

\begin{proof}
    Begin by noting $a_{\rho_k}(\x_k) = \sum_{\tau\in \SymG_k} \sgn(\tau) \tau \cdot\x_k^{\rho_k}$.
    By Remark~\ref{remark:ExtProd} the left-hand side becomes the group algebra element
\[
    \sum_{\sigma\in\mathrm{Shuff}(k,k)}
    \sum_{\tau_1\in \SymG_k} \sum_{\tau_2\in \SymG_k}
    \sgn(\sigma)\sgn(\tau_1)\sgn(\tau_2) ~\sigma(\tau_1\cdot \x_k^{\rho_k}\otimes\tau_2\cdot \y_k^{\rho_k})
\]
    where we have used two alphabets $\x$ and $\y$ to distinguish tensor factors. Now using 
    the fact that $\mathrm{Shuff}(k,k)$ is a set of minimal length coset representatives of $\SymG_{2k}/(\SymG_{k}\times\SymG_{k})$ this becomes $\sum_{\pi\in\SymG_{2k}}
    \sgn(\pi) ~\pi\cdot\x_k^{\rho_k}\y_k^{\rho_k}$.

    To write the right-hand side we follow the action of the different symmetric groups. The operator is
    \begin{multline*}
        \extproduct_{kk} \x_k^{\rho_k}\y_k^{\rho_k} (\extproduct_{2} \,\otimes\, 
        \stackrel{k}{\cdots}\,\otimes\,\extproduct_{2})(\symcoproduct_{(1^k)}\otimes\symcoproduct_{(1^k)})\\
        = 
        \sum_{\sigma\in\mathrm{Shuff}(2^k)}
        \sum_{\substack{\tau_1\in\SymG_2\\\cdots\\\tau_k\in\SymG_2}} \sgn(\sigma)\sgn(\tau_1)\cdots\sgn(\tau_k)\sigma\cdot\x_k^{\rho_k}\y_k^{\rho_k}~ \tau_1\cdots\tau_k,
    \end{multline*}
    where the different copies of the symmetric group
    $\SymG_2$ act on the sets $\{x_i,y_i\}$. Note that $\tau_i(\x_k^{\rho_k}\y_k^{\rho_k})=\x_k^{\rho_k}\y_k^{\rho_k}$, so the $\tau_i$'s commute with $\x_k^{\rho_k}\y_k^{\rho_k}$. The expression becomes
    \[
       \sum_{\pi\in\SymG_{2k}}
        \sgn(\pi)\,\pi\cdot\x_k^{\rho_k}\y_k^{\rho_k}. \qedhere
    \]
\end{proof}

\subsubsection*{The evaluation map}
The $\SL_2(\F)$-isomorphisms in \cite{hooks} were stated and proved using  the evaluation map 
studied here.
A generalisation and restriction of this map play an important role in the proof of Theorem~\ref{thm:main}.
We begin by introducing the most general version of the evaluation map required.
\begin{Definition}\label{def:pi-map}
    Let $\lambda$ be a partition and let $N,a,b\in\N_0$. The \emph{evaluation map} is defined by
    \begin{align}\label{eq:pi}
        \pi_{\lambda,N,a,b}:\Sym_{|\lambda|+N}\Sym^b \E\otimes (\Sym^a\E)^{\otimes\lambda}
        &\to \Sym_{N}\Sym^b\E\otimes (\Sym^{a+b}\E)^{\otimes\lambda}\notag\\
        P(\x_{|\lambda|},\y_{N})Q(\z_{|\lambda|})&\mapsto P(\z_{|\lambda|},\y_N)Q(\z_{|\lambda|}).
    \end{align}
    The parameters $\lambda,N,a,b$ are usually clear from the context, in which case we write just~$\pi$ instead of $\pi_{\lambda, N, a, b}$.
\end{Definition}

\begin{Proposition}\label{p:injectivity pi}
    The map $\pi_{\lambda, N,a,b}$ is well defined and $\SL_2(\F)$-equi\-va\-riant. If $N \ge a$ then $\pi_{\lambda,N,a,b}$ is injective for any partition $\lambda$ and $b\in \N_0$.
\end{Proposition}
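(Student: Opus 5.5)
The proof works in the polynomial model: $\Sym_{|\lambda|+N}\Sym^b\E=\Lambda_{\le b}[\x_{|\lambda|},\y_N]$, and $(\Sym^a\E)^{\otimes\lambda}$ is the space of polynomials in $\z_{|\lambda|}=(z_1,\ldots,z_{|\lambda|})$, one variable per box, of degree at most $a$ in each. Well-definedness and equivariance are direct checks against~\eqref{eq:SL2action}; injectivity reduces, one box at a time, to a freeness statement for symmetric polynomials over polynomials in one variable.

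\emph{Well-definedness.} The domain is spanned by products $P(\x,\y)Q(\z)$, and $\pi$ is the restriction of the ring homomorphism $\F[\x,\y,\z]\to\F[\y,\z]$ given by $x_i\mapsto z_i$. So $\pi$ is linear, and we only need to check where the image lands.
\begin{itemize}
\item After substitution, $P(\z,\y)$ is still symmetric in $\y$, with degree at most $b$ in each $y_j$. Hence the image lies in $\Lambda_{\le b}[\y_N]$ tensored with polynomials in $\z$.
\item Each $z_i$ has degree at most $b$ in $P(\z,\y)$ and at most $a$ in $Q(\z)$, so degree at most $a+b$ in the product.
\end{itemize}

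\emph{Equivariance.} Apply~\eqref{eq:SL2action} with the appropriate degree for each variable. A group element $g$ multiplies $P$ by $\prod_i(\beta x_i+\delta)^b\prod_j(\beta y_j+\delta)^b$ and replaces each variable by its M\"obius transform. It multiplies $Q$ by $\prod_i(\beta z_i+\delta)^a$, with the same transformation of the $z_i$. Substituting $x_i=z_i$ commutes with applying the same M\"obius transform to every variable. The scalar factors combine to $\prod_i(\beta z_i+\delta)^{a+b}\prod_j(\beta y_j+\delta)^b$, which is exactly the factor for the target module.

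\emph{Injectivity for $N\ge a$.} Set $M=|\lambda|$ and substitute one variable at a time; the key algebraic fact is the following.
\begin{itemize}
\item Let $S$ be a set of $r$ variables and $x$ one further variable. Then $\Lambda[S][x]$ is a free $\Lambda[S\cup\{x\}]$-module with basis $1,x,\ldots,x^{r}$.
\item Reason: $x$ is a root of the monic polynomial $\prod_{s\in S\cup\{x\}}(t-s)$ of degree $r+1$, whose coefficients lie in $\Lambda[S\cup\{x\}]$. This is the standard ``symmetric polynomials over partially symmetric ones'' freeness. A rank count, or an explicit triangularity using $e_i(S\cup\{x\})=e_i(S)+x\,e_{i-1}(S)$, gives uniqueness of the expansion.
\end{itemize}
Now take an element $F=\sum_{\alpha}G_\alpha(\x,\y)\,\z^\alpha$ in the kernel, with $\alpha\le(a,\ldots,a)$ componentwise and $G_\alpha\in\Lambda_{\le b}[\x,\y]$.
\begin{itemize}
\item Group the terms by the exponent $k=\alpha_1$ of $z_1$. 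Regard the remaining variables $z_2,\ldots,z_M$ as coefficients, and regard the $G_\alpha$ as symmetric in $\{x_1\}\cup S$, where $S=\{x_2,\ldots,x_M\}\cup\y$ has $r=N+M-1\ge a$ elements.
\item Substituting $x_1=z_1$ turns the $z_1^k$ factor into $x_1^k$, with $k\le a\le r$. By the uniqueness of the expansion in the basis $1,x_1,\ldots,x_1^{r}$, vanishing after the first substitution forces each coefficient to vanish separately.
\item Proceed inductively to $x_2,\ldots,x_M$. At step $i$ the relevant symmetric set $\{x_i,\ldots,x_M\}\cup\y$ still has at least $N\ge a$ elements besides $x_i$, so the freeness bound continues to apply.
\item After $M$ steps all $G_\alpha=0$, so $F=0$.
\end{itemize}

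The main obstacle is the bookkeeping in this induction. After the first substitution the coefficients are symmetric only in the remaining variables, and one must ensure the freeness statement is applied over the correct coefficient ring, namely polynomials that are symmetric in $\{x_i,\ldots,x_M\}\cup\y$ and arbitrary in the already-substituted $z$'s. I would phrase the induction as a statement about a substitution map defined on tensor products of such partially symmetric polynomial rings.
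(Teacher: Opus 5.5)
Your proof is correct, but it takes a different route from the paper, which gives no argument of its own here. The paper only observes that the proofs of Propositions~4.9 and~4.10 of \cite{hooks} were written for $\lambda=(M)$ with $Q$ symmetric, never use the symmetry of $Q$, and so apply verbatim.

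Your well-definedness and equivariance checks against~\eqref{eq:SL2action} are the natural direct verifications. The real difference is injectivity, which you derive from the freeness of $\Lambda[S][x]$ over $\Lambda[S\cup\{x\}]$ with basis $1,x,\ldots,x^{|S|}$. That fact holds over any commutative ring, so field-independence comes for free. The citation is economical, but it leaves the reader to re-check a computation from another paper under weaker hypotheses. Your argument is self-contained and shows clearly two things:
\begin{itemize}
\item why the symmetry of $Q$ is irrelevant: the $\z$-monomials only ever act as independent coefficients;
\item exactly where $N\ge a$ is needed: the last substitution, where $x_M$ joins the $N$ variables of $\y$.
\end{itemize}

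Two small remarks on presentation. First, your bullets read as if the hypothesis were vanishing after the \emph{first} substitution. The logic you intend, as your last paragraph indicates, is that $\pi=\pi_M\circ\cdots\circ\pi_1$, with each single substitution $\pi_i$ injective on polynomials symmetric in $\{x_i,\ldots,x_M\}\cup\y$, so the composite is injective.

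Second, the bookkeeping you worry about disappears if you iterate the freeness lemma once and for all. This shows that $\F[\x_M,\y_N]^{\SymG_N}$ (with $\SymG_N$ permuting only $\y$) is free over the ring $\Lambda[\x_M,\y_N]$ of polynomials symmetric in all $M+N$ variables, with basis $\{\x^\alpha : 0\le \alpha_i\le M+N-i\}$. Renaming $\z$ as $\x$, one has $\pi(F)=\sum_\alpha G_\alpha(\x,\y)\,\x^\alpha$ with every $\alpha_i\le a\le N\le M+N-i$. So $\pi(F)=0$ forces all $G_\alpha=0$ in a single step.
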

\begin{proof}
    All these properties follow immediately from the proofs of Proposition 4.9 and Proposition 4.10 in~\cite{hooks}. 
    In the proof of Proposition 4.9 the polynomial $Q(\z)$ in \cite{hooks} is assumed symmetric, 
    but this property is never used in the calculation completing the proof. A similar remark applies
    to Proposition~4.10.
\end{proof}

\begin{proposition}\label{p:pi-sym-restriction}
    By restriction, $\pi_{\lambda,N,a,b}$ induces an $\SL_2(\F)$-homomorphism
    \begin{align}
        \Sym_{|\lambda|+N}\Sym^b \E\otimes\Sym_\lambda\Sym^a E\to\Sym_N\Sym^b \E\otimes \Sym_\lambda\Sym^{a+b}\E
    \end{align}
    which is injective when $N\geq a$.
\end{proposition}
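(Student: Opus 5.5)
Since $\pi_{\lambda,N,a,b}$ is already well defined, $\SL_2(\F)$-equivariant and (for $N\ge a$) injective by Proposition~\ref{p:injectivity pi}, the plan is to check only one thing. We must show that $\pi$ carries the subspace $\Sym_{|\lambda|+N}\Sym^b\E\otimes\Sym_\lambda\Sym^a\E$ of $\Sym_{|\lambda|+N}\Sym^b\E\otimes(\Sym^a\E)^{\otimes\lambda}$ into the subspace $\Sym_N\Sym^b\E\otimes\Sym_\lambda\Sym^{a+b}\E$ of the codomain. Equivariance and injectivity of a restriction are then inherited automatically.

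First I will fix the identifications. Via the coproduct $\symcoproduct^\lambda$ (equivalently, the definition of lower symmetric powers as invariants), $\Sym_\lambda\Sym^a\E$ is the subspace of $(\Sym^a\E)^{\otimes\lambda}$ fixed by the Young subgroup $\SymG_{\lambda}=\SymG_{\lambda_1}\times\cdots\times\SymG_{\lambda_\ell}$ permuting tensor factors within each row of $[\lambda]$. In the polynomial model, $(\Sym^a\E)^{\otimes\lambda}$ is the space of polynomials $Q(\z_{|\lambda|})$ of degree at most $a$ in each variable, with one variable $z_{ij}$ per box. Then $\Sym_\lambda\Sym^a\E$ consists of those $Q$ symmetric in each row's variables separately, that is $\Lambda_{\le a}[\z_{\text{row }1}]\otimes\cdots\otimes\Lambda_{\le a}[\z_{\text{row }\ell}]$. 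The same description holds with $a+b$ in place of $a$. Similarly, $\Sym_{|\lambda|+N}\Sym^b\E$ consists of polynomials $P(\x_{|\lambda|},\y_N)$ symmetric in all $|\lambda|+N$ variables. A tensor in the domain is a sum of products $P\,Q$; more invariantly, it is a polynomial $F(\x,\y,\z)$ symmetric in $(\x,\y)$ and row-symmetric in $\z$.

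The key step is a symmetry check. Take $F$ in the domain subspace and $\sigma\in\SymG_\lambda$, acting simultaneously on the $\x$ variables (indexed by the boxes of $\lambda$) and on the $\z$ variables. The image $\pi(F)=F(\z,\y,\z)$ is still symmetric in $\y$, since $F$ is symmetric in $(\x,\y)$ and hence in $\y$ alone. Moreover $\sigma\cdot \pi(F) = F(\sigma\z,\y,\sigma\z)$. Since $F$ is symmetric in $\x$, this equals $F(\z,\y,\sigma\z)$, and since $F$ is row-symmetric in $\z$ it equals $F(\z,\y,\z)$. So $\pi(F)$ is invariant under $\SymG_\lambda$ acting on $\z$. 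The degree bounds, at most $b$ in each $y_k$ and at most $a+b$ in each $z_{ij}$, already hold because $\pi$ is well defined. Hence $\pi(F)\in\Sym_N\Sym^b\E\otimes\Sym_\lambda\Sym^{a+b}\E$.

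The only mildly delicate point is the identification of the tensor-product subspaces with the polynomial invariant subspaces, which must hold over an arbitrary field. Since the lower symmetric powers are defined as invariants, I will use the fact that taking invariants commutes with tensoring over a field for commuting actions on separate factors. Concretely, $(V^{\otimes r})^{\SymG_r}\otimes W$ equals the $\SymG_r$-invariants of $V^{\otimes r}\otimes W$, because tensoring with $W$ over a field is exact. This justifies describing both subspaces as sets of polynomials with the stated symmetries, and the check above then completes the argument.
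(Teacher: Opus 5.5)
Your proposal is correct and follows essentially the same route as the paper: check that $\pi$ sends polynomials symmetric in $(\x,\y)$ and row-symmetric in $\z$ to polynomials symmetric in $\y$ and row-symmetric in $\z$, with the right degree bounds, and then inherit equivariance and injectivity from Proposition~\ref{p:injectivity pi}. The differences are only presentational. You argue with a general $F$ rather than a pure tensor $P\,Q_{\lambda_1}\cdots Q_{\lambda_t}$. You also explicitly justify, via exactness of tensoring over a field, why such invariant polynomials lie in $\Sym_N\Sym^b\E\otimes\Sym_\lambda\Sym^{a+b}\E$, a step the paper leaves implicit.
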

\begin{proof}
    By linearity, it suffices to consider a generic pure tensor of the domain:
    \[
    P(\x_{\lambda},\y_N)Q_{\lambda_1}(z_1,\ldots,z_{\lambda_1})\cdots Q_{\lambda_t}{(z_{|\lambda|-\lambda_t+1},\ldots, z_{|\lambda|})}
    \]
    for some symmetric polynomials $P,Q_{\lambda_1}, \ldots, Q_{\lambda_t}$, where $t=\ell(\lambda)$. By Definition~\ref{def:pi-map}, this element is mapped to
    \[
    P(\z_{|\lambda|},\y_N)Q_{\lambda_1}(z_1,\ldots,z_{\lambda_1})\cdots Q_{\lambda_t}{(z_{|\lambda|-\lambda_{t}+1},\ldots, z_{|\lambda|})},
    \]
    which is symmetric in each of the alphabets $\y, \{z_1,\ldots,z_{\lambda_1}\},\dots, \{z_{|\lambda|-\lambda_{t}+1},\ldots, z_{|\lambda|}\}$ separately. Moreover, each of the variables in $\y$ appears in each monomial with degree at most $b$, and each of the variables in $\z$ with degree at most $a+b$. Thus the restriction described above is well defined. Injectivity and $\SL_2(\F)$-equivariance are both inherited from the properties of $\pi$ proved in Proposition~\ref{p:injectivity pi}.
\end{proof}

We may abuse notation by also writing the restricted map as $\pi_{\lambda, N, a,b}$ or, when the 
context makes the parameters clear, just as $\pi$.

\begin{remark}
    When $\lambda=(M)$, the restricted map is precisely $\pi$  from \cite{hooks}.
\end{remark}

In the proof of Theorem~\ref{thm:main} we will make use of yet another restriction of $\pi$ which we postpone
until \S\ref{sec:tilde-pi}.

\subsubsection*{Two-row Weyl modules as vector spaces of polynomials}
In order to interpret Weyl modules in this context, let $\lambda=(n,m)$ to obtain
    $$
    \Sym_{(n,m)}\Sym^d \E \cong \Lambda_{\le d}[x_{11},\ldots,x_{1n}] \otimes 
    \Lambda_{\le d}[x_{21},\ldots,x_{2m}],
    $$ 
    which is spanned by pure tensors of symmetric polynomials
    $$f\otimes g =  f(x_{11},\ldots,x_{1n})g(x_{21},\ldots,x_{2m}).
    $$
    By Remark~\ref{remark:ExtProd}, the map $\psi_{(n,m)}$ from \eqref{eq:psi lam} corresponds to multiplication by the element
    $
    \prod_{i=1}^{m}\bigl( 1-(x_{1i},x_{2i}) \bigr)$
    of the group algebra $\F\SymG(x_{11},\ldots,x_{2m})$ of the symmetric group in the elements of $\x$. Thus, in light of Remark~\ref{rem:lower-exterior}, $\Delta^{(n,m)}\Sym^d \E$ is the submodule of $\bwedge{(n,m)}\Sym^d \E\le (\Sym^d E)^{\otimes(n+m)}$ spanned by the elements
    \[
    \prod_{i=1}^{m}(1-(x_{1i},x_{2i}))f(x_{11},\ldots,x_{1n})g(x_{21},\ldots,x_{2m})
    \]
    for symmetric $f$ and $g$.

\subsection{Weyl modules as quotients}\label{sec:weyl modules as quotients}
By the first isomorphism theorem, we can describe $\im(\psi_\lambda)$
as a quotient of $\Sym_\lambda \!V$ by a certain submodule $U_\lambda=\ker\psi_\lambda$. This is
done in \cite[2.1.15(a)]{Wey03}, in a construction that we present
here only for two-row partitions. (For an equivalent presentation see \cite[Theorem~1.2]{McDowell25}.) In the two-row
case,
$U_\lambda$ is defined as the submodule of $\Sym_\lambda V$ spanned by the images
$\im(\theta_{uv})$ of the following maps, for all choices of
$0\le u\le \lambda_1$ and $0\le v\le u+v < \lambda_2$. The map
$\theta_{uv}$ is defined as the composition
\[
\Sym_{(u, \lambda_1-u+\lambda_2-v,v)} \! V
\xrightarrow{1\otimes\symcoproduct\otimes1}  
\Sym_{(u,\lambda_1-u,\lambda_2-v,v)} \! V
\xrightarrow{\symproduct\otimes\symproduct}
\Sym_{(\lambda_1,\lambda_2)} \! V
\]
where, as defined in~\S\ref{subsec:productCoproduct}, $\symcoproduct$ is the coproduct for symmetric powers
and $\symproduct$ is the product for symmetric powers.

The following property of $U_{nm}$ is a consequence of Lemma~\ref{lem:commute}, and will be useful in the proof of Theorem~\ref{thm:main}. Recall that $\wr$ is the isomorphism from~\eqref{eq:wr}, the maps~$\extproduct$ and $\symcoproduct$ are the product for exterior powers and the coproduct for symmetric powers defined in \S\ref{subsec:productCoproduct}, and the map $\psi$ is the composition from Definition~\ref{def:weyl-as-image}. 
\begin{proposition}\label{p:wr-U_nn-ker-mu}
  For $0\le i \le m-1$, $k=m-i$, and $\tilde n=n-m+i=n-k$, we have $(\wr_k\otimes1)^{\otimes2}(\symcoproduct_{k\tilde n}\otimes \symcoproduct_{ki})U_{nm}\subseteq\ker(\extproduct_{(2^k)}\otimes\psi_{\tilde{n}i})$.
\end{proposition}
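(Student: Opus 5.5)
The plan is to reduce the statement to the fact that $U_{nm}=\ker\psi_{nm}$, and to pass the composite map through Lemma~\ref{lem:commute}. Write $\Phi=(\extproduct_{(2^k)}\otimes\psi_{\tilde n i})(\wr_k\otimes 1)^{\otimes 2}(\symcoproduct_{k\tilde n}\otimes\symcoproduct_{ki})$, a map $\Sym_{(n,m)}\Sym^d\E\to \bwedge{2k}\Sym^{d+k-1}\E\otimes\bwedge{(\tilde n,i)}\Sym^d\E$, after reordering tensor factors so that the two $k$-blocks sit together. It suffices to show that $\Phi$ factors through $\psi_{nm}$, that is, $\Phi=\Xi\circ\psi_{nm}$ for some linear map $\Xi$ defined on $\bwedge{(n,m)}(\Sym^d\E)$ (or on a space containing its image). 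Then $U_{nm}=\ker\psi_{nm}\subseteq\ker\Phi$, which is the claim.

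To build $\Xi$, I split the first $k$ columns of the shape $(n,m)$ from the remaining columns. The first $k$ columns form a $(2^k)$ block, and the remaining columns form the shape $(\tilde n,i)$. Since $i=m-k$ and $\tilde n=n-k$, the remainder has exactly $\tilde n$ boxes in row one and $i$ boxes in row two. At the level of $V^{\otimes(n,m)}$, the map $\psi_{nm}$ factors as $\extproduct^{(n,m)}\circ\symcoproduct^{(n,m)}$. Moreover $\symcoproduct^{(n,m)}$ equals $(\symcoproduct^{(k)}\otimes\symcoproduct^{(\tilde n)})\otimes(\symcoproduct^{(k)}\otimes\symcoproduct^{(i)})$ precomposed with $\symcoproduct_{k\tilde n}\otimes\symcoproduct_{ki}$, by coassociativity. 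Also $\extproduct^{(n,m)}$ is the tensor product of the column products over the first $k$ columns, namely $\extproduct_2^{\otimes k}$, with $\extproduct^{(\tilde n,i)}$. Consequently
\[
\psi_{nm}=\bigl(\psi_{kk}\otimes\psi_{\tilde n i}\bigr)\circ(\symcoproduct_{k\tilde n}\otimes\symcoproduct_{ki}),
\]
with the evident rearrangement of factors. Here $\psi_{kk}$ maps $\Sym_k V\otimes\Sym_k V$ to $\bwedge{2}V^{\otimes k}$.

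Now apply Lemma~\ref{lem:commute} on the $(2^k)$ block. It gives $\extproduct_{(2^k)}\wr_k^{\otimes 2}=\extproduct_{kk}\,\x_k^{\rho_k}\y_k^{\rho_k}\,\psi_{kk}$. Therefore
\[
\Phi=\bigl(\extproduct_{kk}\x_k^{\rho_k}\y_k^{\rho_k}\otimes 1\bigr)\circ(\psi_{kk}\otimes\psi_{\tilde n i})\circ(\symcoproduct_{k\tilde n}\otimes\symcoproduct_{ki})=\bigl(\extproduct_{kk}\x_k^{\rho_k}\y_k^{\rho_k}\otimes 1\bigr)\circ\psi_{nm},
\]
and so $\Xi=\extproduct_{kk}\x_k^{\rho_k}\y_k^{\rho_k}\otimes 1$ works. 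In group-algebra terms, as in Remark~\ref{remark:ExtProd}, all maps are operators on polynomials in the variables $x_{1j},x_{2j}$, and the factorisation is an identity of operators.

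The main obstacle is to make the factorisation honest, and there are two points to check. First, the multiplication operator $\x_k^{\rho_k}\y_k^{\rho_k}$ does not preserve the degree bound $d$ in each variable. So $\Xi$ should be regarded as a map on the ambient polynomial space $\F[\x,\y,\ldots]$, or on $(\Sym^{d+k-1}\E)^{\otimes}$, rather than on $\bwedge{(n,m)}\Sym^d\E$ itself. This is harmless, because the kernel inclusion only needs $\Xi$ to be a well-defined linear map on some space containing $\im\psi_{nm}$, together with the identity $\Phi=\Xi\psi_{nm}$ on $\Sym_{(n,m)}\Sym^d\E$. Second, the pairing of variables must be tracked carefully: Lemma~\ref{lem:commute} uses the transpositions $(x_j,y_j)$, and these must match the transpositions $(x_{1j},x_{2j})$ for $j\le k$ in $\psi_{nm}$. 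This holds precisely because $\symcoproduct_{k\tilde n}$ and $\symcoproduct_{ki}$ split off the first $k$ variables of each row. I would verify this by writing both sides on a pure tensor $f(\x_1)g(\x_2)$ with $\prod_{j\le m}(1-(x_{1j},x_{2j}))$, as in the description of $\Delta^{(n,m)}$ in the preceding subsection.
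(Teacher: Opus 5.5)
Your proposal is correct and follows the paper's own proof essentially step for step. You factor $\psi_{nm}=(\psi_{kk}\otimes\psi_{\tilde n i})(\symcoproduct_{k\tilde n}\otimes\symcoproduct_{ki})$, use Lemma~\ref{lem:commute} to rewrite the composite as $(\extproduct_{kk}\x_k^{\rho_k}\y_k^{\rho_k}\otimes 1)\psi_{nm}$, and conclude from $U_{nm}=\ker\psi_{nm}$; your extra remarks on the degree bound of the multiplication operator and on matching the transposition pairs are sound details that the paper leaves implicit.
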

\begin{proof}
    The claim is that $U_{nm}$, that is the kernel of $\psi_{nm}$, is annihilated by $(\extproduct_{(2^k)}\otimes\psi_{\tilde{n}i})(\wr_k\otimes1)^{\otimes2}(\symcoproduct_{k\tilde n}\otimes \symcoproduct_{ki})$.
    We use Lemma~\ref{lem:commute} to write
    \begin{align*}
        (\extproduct_{(2^k)}\otimes\psi_{\tilde{n}i})(\wr_k\otimes1)^{\otimes2}&(\symcoproduct_{k\tilde n}\otimes \symcoproduct_{ki}) = 
        (\extproduct_{kk}\x_{1,*}^{\rho_k}\x_{2,*}^{\rho_k}\psi_{kk}\otimes \psi_{\tilde{n}i})(\symcoproduct_{k\tilde n}\otimes \symcoproduct_{ki})
        \\
        &= 
        (\extproduct_{kk}\x_{1,*}^{\rho_k}\x_{2,*}^{\rho_k}\otimes1)(\psi_{kk}\otimes \psi_{\tilde{n}i}) (\symcoproduct_{k\tilde n}\otimes \symcoproduct_{ki})
        \\
        &= 
        (\extproduct_{kk}\x_{1,*}^{\rho_k}\x_{2,*}^{\rho_k}\otimes1) \psi_{nm}\,.
    \end{align*}
    The claim is then clear.
\end{proof}

\section{Two auxiliary maps}\label{sec:aux}

In this section we define two $\SL_2(\F)$-homomorphisms which we will use in the proof of Theorem~\ref{thm:main},
namely
    \[
    \tilde\pi:
    \Sym_{n+m}\Sym^{d-k}\E\otimes
    \Delta^{(\tilde{n},i)}\Sym^k\E \hookrightarrow\Sym_{2k}\Sym^{d-k}\E\otimes\Delta^{(\tilde{n},i)}\Sym^d\E
    \]
and
\[
    \varphi: \Sym_{(n,m)}\Sym^k\E \twoheadrightarrow
    \Sym_{(\tilde{n},i)}\Sym^k\E,
\]
where $i\in\{0,1,\ldots,m-1\}$, $k=m-i$, and $\tilde n=n-m+i$.
We will show that they are an injection and a surjection, respectively. Both maps are of independent interest as plethystic lifts of certain $q$-binomial identities stated in~\cite{GMSW-FPSAC}.

\subsection{The \texorpdfstring{$\tilde\pi$}{π} map}\label{sec:tilde-pi}
Let $\lambda$ be a partition of size $M$.
In this subsection we prove, by means of an explicit field-independent $\SL_2(\F)$-homomorphism, that  whenever $N\geq a$
\begin{equation}\label{eq:SchurPositive}
\Qbinom{N+b}{b}\cdot s_{\lambda}\circ s_{a+b}(q,q^{-1})-\Qbinom{M+N+b}{b}\cdot s_\lambda\circ s_{a}(q,q^{-1})
\end{equation}
is Schur positive, meaning that it can be expressed as a non-negative linear combination of quantum integers.
When $\lambda = (M)$ and $N=a$, the difference becomes $0$ --- this is an instance of trinomial revision. Thus our proof follows and generalises the proof of~\eqref{eq:GTL} in~\cite{hooks}.

Towards this goal, we show that if $N\ge a$ then $\pi_{\lambda,N,a,b}$ restricts to an $\SL_2(\F)$-equivariant injection $\tilde{\pi}$ as follows (note that $\Delta^{\lambda}V\leq V^{\otimes \lambda}$ by Remark~\ref{rem:lower-exterior} and Definition~\ref{def:weyl-as-image}):
\[\begin{tikzcd}[column sep=4em]
	{\Sym_{M+N}\Sym^bE\otimes \Delta^\lambda\Sym^aE} & {\Sym_N\Sym^bE\otimes\Delta^\lambda\Sym^{a+b} E} \\
	{\Sym_{M+N}\Sym^b \E\otimes (\Sym^a\E)^{\otimes\lambda}} & {\Sym_{N}\Sym^b\E\otimes (\Sym^{a+b}\E)^{\otimes\lambda}}
	\arrow["{\tilde{\pi}}", dashed, from=1-1, to=1-2]
	\arrow[hook, from=1-1, to=2-1]
	\arrow[hook, from=1-2, to=2-2]
	\arrow["{\pi_{\lambda,N,a,b}}"', from=2-1, to=2-2]
\end{tikzcd}\]

As in Remark~\ref{remark:ExtProd}, consider the action of $\F\SymG_{M}$ on $(\Sym^\bullet\E)^{\otimes\lambda}$.

\begin{Lemma}\label{lem:mu-pi-commute}
    The action of $\F\SymG_{M}$ on the second tensor factors commutes with $\pi$.
\end{Lemma}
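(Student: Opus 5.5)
The plan is to check the claim on a spanning set and reduce it to a relabelling of variables. Fix $\sigma\in\SymG_M$. In the domain $\Sym_{M+N}\Sym^b\E\otimes(\Sym^a\E)^{\otimes\lambda}$, $\sigma$ acts on the second tensor factor by permuting the $M$ tensor factors indexed by the boxes of $[\lambda]$. In the polynomial model this means permuting the alphabet $\z_M=(z_1,\ldots,z_M)$:
\[
\sigma\cdot Q(\z_M)=Q(z_{\sigma^{-1}(1)},\ldots,z_{\sigma^{-1}(M)}).
\]
The action in the codomain $\Sym_N\Sym^b\E\otimes(\Sym^{a+b}\E)^{\otimes\lambda}$ is described the same way. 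Since both $\pi$ and the $\sigma$-action are linear, it is enough to check $\pi(\sigma\cdot(PQ))=\sigma\cdot\pi(PQ)$ on pure tensors $P(\x_M,\y_N)\,Q(\z_M)$. Here $P$ is symmetric in all $M+N$ variables $\x_M,\y_N$, and $Q$ is an arbitrary polynomial of degree at most $a$ in each $z_j$. These pure tensors span the domain.

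The computation runs as follows.
\begin{itemize}
\item[(i)] The left side is $\pi\bigl(P(\x_M,\y_N)\,Q(\sigma^{-1}\z_M)\bigr)=P(\z_M,\y_N)\,Q(\sigma^{-1}\z_M)$.
\item[(ii)] The right side is $\sigma$ applied to $P(\z_M,\y_N)Q(\z_M)$, where now $\sigma$ acts on the $\z$-alphabet of the codomain. This gives $P(\sigma^{-1}\z_M,\y_N)\,Q(\sigma^{-1}\z_M)$.
\item[(iii)] $P$ is symmetric in its first $M+N$ arguments, in particular in the first $M$. Hence $P(\sigma^{-1}\z_M,\y_N)=P(\z_M,\y_N)$, and the two sides agree.
\end{itemize}
Extending linearly gives commutation with all of $\F\SymG_M$.

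The only point needing care is the identification of $\sigma$'s action in the codomain. There each tensor factor $V(i,j)=\Sym^{a+b}\E$ is the polynomials of degree at most $a+b$ in one variable $z_{(i,j)}$. So permuting the tensor factors really is permuting the variables $z_j$ of the product polynomial, including inside $P(\z_M,\y_N)$. This holds because $(\Sym^{a+b}\E)^{\otimes M}$ is identified with polynomials of degree at most $a+b$ in each $z_j$ separately, and the tensor-factor permutation corresponds to variable substitution. With that identification fixed (it is the one used in Definition~\ref{def:pi-map}), the symmetry of $P$ does the rest. A corollary is that $\pi$ carries $\im(\extproduct^\lambda\symcoproduct^\lambda)$ into the corresponding image, which is what is needed for $\tilde\pi$.
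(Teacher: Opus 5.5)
Your proof is correct and is essentially the paper's argument: check the identity on pure tensors $P(\x_M,\y_N)Q(\z_M)$, note that $\pi$ evaluates $\x_M$ at $\z_M$, and use the symmetry of $P$ in its first $M$ variables so that permuting $\z_M$ after evaluation affects only $Q$. Your explicit identification of the codomain action of $\sigma$ as a permutation of the $z_j$, including inside $P(\z_M,\y_N)$, spells out the step the paper leaves implicit.
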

\begin{proof}
    By linearity, it suffices to verify the statement for $\sigma\in\SymG_{M}$ acting on variables $z_1,z_2,\ldots,z_{M}$. We compute directly:
    \begin{align*}
        \sigma\pi(P(\x_{M},\,\y_N)Q(\z_{M}))=\sigma\left(P(\z_{M},\y_N)Q(\z_{M})\right)=P(\z_{M},\y_N)\cdot\sigma Q(\z_{M})\\
        =\pi\left( P(\x_{M},\y_N)\cdot \sigma Q(\z_{M})\right)=\pi\sigma\left(P(\x_{M},\y_N)Q(\z_{M})\right).
    \end{align*}
    where the second and last equalities hold because $P$ (unlike $Q$) is a symmetric polynomial.
\end{proof}

Recall from~\ref{subsec:productCoproduct} 
that $\extproduct$ is the product on exterior powers and $\symcoproduct$ is the coproduct on symmetric powers.

\begin{corollary}
    Let $|\lambda|=M$ and $N\ge a$. Then, there is an injective $\mathrm{SL}_2(\F)$-homomorphism
    \[
    \tilde\pi:
    {\Sym_{M+N}\Sym^b \E\otimes \Delta^\lambda\Sym^a \E} \hookrightarrow {\Sym_N\Sym^b \E\otimes\Delta^\lambda\Sym^{a+b} \E}\,.
    \]
\end{corollary}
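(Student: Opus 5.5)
The plan is to obtain $\tilde\pi$ as the restriction of $\pi=\pi_{\lambda,N,a,b}$ along the two vertical inclusions in the diagram above. It then suffices to check three things: that $\pi$ maps $\Sym_{M+N}\Sym^b\E\otimes\Delta^\lambda\Sym^a\E$ into $\Sym_N\Sym^b\E\otimes\Delta^\lambda\Sym^{a+b}\E$; that the restriction is $\SL_2(\F)$-equivariant; and that it is injective. Equivariance and injectivity need no new work. By Proposition~\ref{p:injectivity pi}, $\pi$ is an $\SL_2(\F)$-homomorphism, and it is injective as soon as $N\ge a$. Both properties pass to restrictions to submodules. Only the containment of images needs an argument.

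For the containment, I would use Definition~\ref{def:weyl-as-image} together with Remark~\ref{rem:lower-exterior}. These identify $\Delta^\lambda V$ with the image in $V^{\otimes\lambda}$ of a single group algebra element $\omega_\lambda\in\F\SymG_M$, namely the composite of $\symcoproduct^\lambda$ (the identity on $V^{\otimes\lambda}$, restricted to row-symmetric tensors) followed by the column antisymmetrisation $\extproduct^\lambda$, with the lower exterior power realised inside $V^{\otimes\lambda}$. Concretely, $\Delta^\lambda V=\omega_\lambda\cdot\Sym_\lambda V$, and $\omega_\lambda$ does not depend on $V$. So a spanning set of the domain of $\tilde\pi$ consists of elements $P\otimes\omega_\lambda Q$ with $P\in\Sym_{M+N}\Sym^b\E$ and $Q\in\Sym_\lambda\Sym^a\E\le(\Sym^a\E)^{\otimes\lambda}$. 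Lemma~\ref{lem:mu-pi-commute} (extended linearly to $\F\SymG_M$) then gives
\[
\pi(P\otimes\omega_\lambda Q)=\omega_\lambda\,\pi(P\otimes Q).
\]
By Proposition~\ref{p:pi-sym-restriction}, $\pi(P\otimes Q)$ lies in $\Sym_N\Sym^b\E\otimes\Sym_\lambda\Sym^{a+b}\E$. Applying $\omega_\lambda$, which acts only on the $\z$-variables and hence only on the second tensor factor, lands in $\Sym_N\Sym^b\E\otimes\omega_\lambda\Sym_\lambda\Sym^{a+b}\E=\Sym_N\Sym^b\E\otimes\Delta^\lambda\Sym^{a+b}\E$. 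This proves the containment.

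The one point that needs care is the identification of $\extproduct^\lambda$ with a group algebra element acting inside $V^{\otimes\lambda}$. The target $\bwedge{\lambda}V$ is a quotient, so I would replace it by the lower exterior power, that is, the signed invariants. In that model the product $\extproduct$ is the signed shuffle sum of Remark~\ref{remark:ExtProd}, and it is given by the same element of $\F\SymG_M$ for every $V$, in particular for both $\Sym^a\E$ and $\Sym^{a+b}\E$. This uniformity is exactly what lets Lemma~\ref{lem:mu-pi-commute} apply. I would also confirm that this model yields the same $\Delta^\lambda$ up to $\GL(V)$-isomorphism, which Remark~\ref{rem:lower-exterior} asserts. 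With these points settled, the corollary follows.
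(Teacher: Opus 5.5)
Your proposal is correct and follows the paper's own proof. The paper likewise treats $\extproduct^\lambda\symcoproduct^\lambda$ as an element of $\F\SymG_M$, commutes it past $\pi$ using Lemma~\ref{lem:mu-pi-commute}, and applies Proposition~\ref{p:pi-sym-restriction} to land in $\Sym_N\Sym^b\E\otimes\Sym_\lambda\Sym^{a+b}\E$; it then inherits equivariance and injectivity from Proposition~\ref{p:injectivity pi}.
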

\begin{proof}
    We will first show that this restriction of $\pi$ is well-defined. Recall from \S\ref{sec:image} that 
    \[
    \Delta^{\lambda}\Sym^a\E = \extproduct^{\lambda}\symcoproduct^{\lambda} \Big(\Sym_{\lambda}\Sym^a\E\Big).
    \]
    Since $\symcoproduct^\lambda$ is a canonical inclusion, we view $\symcoproduct^\lambda\Sym_\lambda\Sym^a\E$ as an isomorphic copy of $\Sym_\lambda\Sym^a\E$ inside the space $(\Sym^a\E)^{\otimes\lambda}$ and so by Proposition~\ref{p:pi-sym-restriction},
    \begin{align*}
        \pi\Big(\Sym_{M+N}\Sym^b\E&\otimes \symcoproduct^\lambda\Sym_\lambda\Sym^a\E\Big)
        \\&\leq\Sym_{N}\Sym^{b}\E\otimes \symcoproduct^\lambda\Sym_\lambda\Sym^{a+b}\E\,.
    \end{align*}
    View $\extproduct^\lambda$ as an element of the group algebra $\F\SymG_{M}$ as in Remark~\ref{remark:ExtProd}.  
    By the observation above and Proposition~\ref{p:pi-sym-restriction},
    \begin{align*}
        \pi_{\lambda,N, a,b }\Big( &\Sym_{M+N}\Sym^{b}\E\otimes \Delta^{\lambda}\Sym^a\E \Big)\\
        &=\pi(1\otimes\extproduct^\lambda)\left(\Sym_{M+N}\Sym^{b}\E\otimes \symcoproduct^{\lambda}\Sym_{\lambda}\Sym^a\E\right)\\
        &=(1\otimes\extproduct^\lambda)\pi\left(\Sym_{M+N}\Sym^{b}\E\otimes \symcoproduct^{\lambda}\Sym_{\lambda}\Sym^a\E\right)\\
        &\leq (1\otimes\extproduct^{\lambda})\left(\Sym_{N}\Sym^{b}\E\otimes \symcoproduct^{\lambda}\Sym_{\lambda}\Sym^{a+b}\E\right)\\
        &=\Sym_{N}\Sym^{b}\E\otimes\Delta^{\lambda}\Sym^{a+b}\E\,.
    \end{align*}
    The $\SL_2(\F)$-equivariance and injectivity of $\tilde \pi$ is inherited from $\pi$.
\end{proof}

Now taking the characters on either side using~\eqref{eq:qPlethysm} we find
that~\eqref{eq:SchurPositive} is the character of the quotient of the codomain of $\tilde\pi$ by its domain,
and so, by~\eqref{eq:qCharacterSym}, it is a linear combination of quantum integers, as we 
claimed at the start of this subsection.

For ease of reference, we state the following special case.
\begin{corollary}\label{cor:tilde-pi}
    Suppose $\tilde{n}+k = n \ge m = i+k$. Then, there is an injective $\mathrm{SL}_2(\F)$-homomorphism
    \[
    \tilde\pi:
    \Sym_{n+m}\Sym^{d-k}\E\otimes
    \Delta^{(\tilde{n},i)}\Sym^k\E \hookrightarrow\Sym_{2k}\Sym^{d-k}\E\otimes\Delta^{(\tilde{n},i)}\Sym^d\E.
    \]
\end{corollary}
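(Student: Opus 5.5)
This is the preceding corollary with $\lambda = (\tilde n, i)$, $M = \tilde n + i$, $N = 2k$, $a = k$ and $b = d-k$. The plan is to check that each object in the statement matches under this substitution and that the hypothesis $N \ge a$ holds.

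First, the domain. We have $M + N = \tilde n + i + 2k = (\tilde n + k) + (i + k) = n + m$. So $\Sym_{M+N}\Sym^b\E \otimes \Delta^\lambda\Sym^a\E$ is $\Sym_{n+m}\Sym^{d-k}\E \otimes \Delta^{(\tilde n,i)}\Sym^k\E$, which is the domain in the statement.

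Next, the codomain. Since $a + b = k + (d-k) = d$, the target $\Sym_N\Sym^b\E \otimes \Delta^\lambda\Sym^{a+b}\E$ becomes $\Sym_{2k}\Sym^{d-k}\E \otimes \Delta^{(\tilde n,i)}\Sym^d\E$. This is also as claimed.

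Then the hypotheses of the corollary:
\begin{itemize}
\item We need $N = 2k \ge k = a$, which holds because $k \ge 0$.
\item We need $b \in \N_0$, that is $k \le d$. In the intended application $k = m - i \le m \le d$. If the statement is read without this, the module $\Sym^{d-k}\E$ only makes sense for $d \ge k$, so this is an implicit standing assumption.
\item We need $(\tilde n, i)$ to be a partition, that is $\tilde n \ge i$. This is equivalent to $n \ge m$, which is the stated hypothesis.
\end{itemize}

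With these checks done, the preceding corollary applies directly and gives the injective $\SL_2(\F)$-homomorphism $\tilde\pi = \pi_{(\tilde n,i),2k,k,d-k}$ restricted as there.

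None of these steps is a real obstacle. The only point needing care is that $N \ge a$ is exactly where injectivity comes from, through Proposition~\ref{p:injectivity pi}, and here it holds with room to spare.
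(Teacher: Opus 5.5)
Your proposal is correct and matches the paper, which presents this corollary without separate proof as the special case $\lambda=(\tilde n,i)$, $N=2k$, $a=k$, $b=d-k$ of the preceding corollary; the map is later referred to as $\tilde\pi_{(\tilde n,i),2k,k,d-k}$. Your remark that $d\ge k$ is an implicit standing assumption is also apt, since in the application $k=m-i\le m\le d$.
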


\subsection{The \texorpdfstring{$\varphi$}{φ} map}
We now prove by an explicit field-independent homomorphism of $\SL_2(\F)$-plethysms that 
\[
\Qbinom{n+k}{k}\Qbinom{m+k}{k}-\Qbinom{n}{k}\Qbinom{m}{k}
\]
is Schur positive.
By introducing two other variables $i$ and $\tilde{n}$ such that $k = m - i$ and $\tilde{n} = n-k$, the expression above becomes
\[
\Qbinom{n+k}{k}\Qbinom{m+k}{k}-\Qbinom{\tilde n+k}{k}\Qbinom{i+k}{k}\,.
\]
By a similar argument to the end of the previous subsection,
it suffices to construct a surjective $\SL_2(\F)$-homomorphism
\[
\varphi: \Sym_{(n,m)}\Sym^k\E \to
    \Sym_{(\tilde{n},i)}\Sym^k\E\,.
\]
The construction requires some combinatorial preliminaries.

\subsubsection*{The flip of a partition}

Recall from~\ref{subsec:partitions} that $L(k,k)$ is the set of partitions fitting into a $k \times k$ box.
Given a partition $\theta\in L(k,k)$, let $\flip_{k}\theta \subseteq [(k^k)]$ be the set of boxes obtained by reflecting the Young diagram $[\theta]$ by the main antidiagonal of $[(k^k)]$. Explicitly,
\[
    (r,c)\in\flip_k\theta\iff (k+1-c, k+1-r)\in[\theta].
\]
For instance,
if $\theta = (5,3,2,2,1)$ then we have
\ytableausetup{boxsize=.85em}
\[
    \theta = \ydiagram{5,3,2,2,1}, \quad
    \flip_5\theta = \ydiagram{5,5,5,5,5}*[\bullet]{4+1,4+1,3+2,1+4,5}\,.
\]
The complementary boxes $[(k^k)]\setminus\flip_k\theta$ can be read as the Young diagram of a partition;
in the example above this partition is $(4,4,3,1)$.

The next combinatorial result appears also in~\cite{AffineJT}.

\begin{lemma}\label{lem:flip} Given $\nu,\theta\in L(k,k)$, we have $(\nu+\rho_k) \sqcup (\theta+\rho_k) = \rho_{2k}$ if and only if $$[\nu]\sqcup\flip_k\theta = [(k^k)],$$
where $\sqcup$ denotes disjoint union.
\end{lemma}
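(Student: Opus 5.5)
The plan is to translate both conditions into statements about lattice paths (boundary paths) and compare them. For $\nu\in L(k,k)$, the parts of $\nu+\rho_k$ are $\nu_j+k-j$ for $j=1,\dots,k$. These are distinct integers in $\{0,\dots,2k-1\}$. Under the standard correspondence, they record the positions of the \emph{vertical} steps of the boundary path of $[\nu]$ inside the $k\times k$ box. Walk from the bottom-left corner to the top-right corner with $2k$ unit steps, labelled $0,\dots,2k-1$. Read the steps in reverse so that the step positions line up with $\nu_j+k-j$. Then the set $S(\nu)=\{\nu_j+k-j\}$ is exactly the set of labels of the vertical steps, and its complement in $\{0,\dots,2k-1\}$ is the set of labels of the horizontal steps. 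This is the usual bijection between $L(k,k)$ and $k$-subsets of a $2k$-set.

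The condition $(\nu+\rho_k)\sqcup(\theta+\rho_k)=\rho_{2k}$ says exactly that $S(\nu)$ and $S(\theta)$ are disjoint, both of size $k$, with union $\{0,\dots,2k-1\}$. Equivalently, $S(\theta)$ is the complement of $S(\nu)$. So the left-hand side holds if and only if the boundary path of $\theta$ is obtained from that of $\nu$ by swapping horizontal and vertical steps at every position.

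The key step is to interpret this swap geometrically. Exchanging the roles of horizontal and vertical steps while keeping the order of steps is a reflection of the path in the main antidiagonal of the $k\times k$ square, up to possibly reversing the path direction. I will check carefully which one it is. With the labelling above, complementing the step set fixes the labels and swaps the step types. A reflection in the antidiagonal maps $(r,c)\mapsto(k+1-c,k+1-r)$; it swaps step types and reverses the traversal direction. It also sends the region below-left of the path to the region above-right. So the region cut off by the new path on the $\theta$-side, namely $[\theta]$, is the reflection of the complementary region $[(k^k)]\setminus[\nu]$. Hence $[\theta]=\flip_k$-image of the complement of $[\nu]$, and since $\flip_k$ is an involution on subsets of the box, this is the same as $\flip_k\theta=[(k^k)]\setminus[\nu]$, i.e.\ $[\nu]\sqcup\flip_k\theta=[(k^k)]$.

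The main obstacle is the bookkeeping of directions: whether complementing the label set corresponds to the antidiagonal reflection or to the diagonal one (transpose composed with a $180^\circ$ rotation). Both swap step types, and they differ by reversing the path. To settle this I will verify directly on $\nu=\varnothing$: then $S(\nu)=\{0,\dots,k-1\}$. The complement $\{k,\dots,2k-1\}$ corresponds to $\theta=(k^k)$. Indeed $\flip_k(k^k)=[(k^k)]$ and $[\varnothing]\sqcup[(k^k)]=[(k^k)]$, which is consistent. I will also check the example in the text, $\theta=(5,3,2,2,1)$ with $\nu=(4,4,3,1)$. Once the orientation is fixed, both directions of the equivalence follow from the bijectivity of $\nu\mapsto S(\nu)$.
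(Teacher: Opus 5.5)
Your strategy is sound and, once repaired, proves the lemma by a different route from the paper's. However, the orientation step, which you rightly single out as the crux, is stated incorrectly.

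The reflection $(r,c)\mapsto(k+1-c,k+1-r)$ does \emph{not} reverse the traversal direction. It is the reflection in the line through the bottom-left and top-right corners of the box, and these are the endpoints of every boundary path. So it fixes both endpoints and sends step $t$ to step $t$ with its type exchanged. That is exactly why complementing $S(\nu)$ with the labels held fixed is the antidiagonal reflection.

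The map that exchanges step types \emph{and} reverses the path is the transpose $(r,c)\mapsto(c,r)$. On labels it is complementation followed by $s\mapsto 2k-1-s$, and it encodes the classical fact that $\{\nu_j+k-j\}$ and $\{k-1+i-\nu'_i\}$ partition $\{0,\dots,2k-1\}$. Your alternative, ``transpose composed with a $180^\circ$ rotation'', is $(r,c)\mapsto(k+1-c,k+1-r)$, i.e.\ $\flip_k$ itself. So the dichotomy as you set it up compares a map with itself.

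A smaller point: if you walk from bottom-left to top-right and label from the start, the vertical step on the right edge of row $j$ is preceded by $\nu_j$ horizontal and $k-j$ vertical steps. It therefore already has label $\nu_j+k-j$, and no reversal is needed.

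The test $\nu=\varnothing$ cannot fix an orientation. There $[(k^k)]\setminus[\nu]$ is the whole box, so every rule $[\theta]=g([(k^k)]\setminus[\nu])$ with $g$ a symmetry of the square passes it. This includes the wrong $\theta=\nu^c$ (where $\nu^c_j=k-\nu_{k+1-j}$) as well as the correct $\theta=(\nu^c)'$. The example from the text does discriminate. For $k=5$ and $\nu=(4,4,3,1)$ one has $\nu^c=(5,4,2,1,1)$ with $S(\nu^c)\cap S(\nu)=\{2,7\}$, whereas $(\nu^c)'=(5,3,2,2,1)=\theta$.

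With the orientation corrected, the argument closes. A boundary path is determined by its set of vertical-step labels, so $S(\theta)=\{0,\dots,2k-1\}\setminus S(\nu)$ holds if and only if the path of $\theta$ is the antidiagonal reflection of the path of $\nu$. That in turn holds if and only if $[\theta]$, the region above-left of its path, is the image of the region below-right of the path of $\nu$, namely $[(k^k)]\setminus[\nu]$. Since $\flip_k$ is an involution, this says $\flip_k\theta=[(k^k)]\setminus[\nu]$.

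You could even bypass the geometry. By the complementarity fact just quoted, the hypothesis reads $\theta_j=k-\nu'_{k+1-j}$ for all $j$, i.e.\ $\theta=(\nu^c)'$, which is a restatement of $\flip_k\theta=[(k^k)]\setminus[\nu]$.

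The paper instead inducts on $|\nu|$. It removes the northernmost removable box $(a,\nu_a)$ of $\nu$ and adds a box to row $b=k-\nu_a+1$ of $\theta$. It reads off from a picture that this preserves the condition $[\nu]\sqcup\flip_k\theta=[(k^k)]$, and observes that on the $\rho$ side it merely swaps the adjacent values $2k-a-b$ and $2k-a-b+1$ between the two sequences. Your bijective argument explains the lemma in one stroke and yields both implications at once. The induction instead trades the global orientation bookkeeping, exactly where your draft went astray, for a local check.
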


\begin{proof}
We work by induction on $|\nu|$. When $\nu = \varnothing$ it is clear that 
$(\nu + \rho_k) \sqcup (\theta + \rho_k) = \rho_{2k}$ if and only if $\theta = (k^k)$,
and this is the case if and only if
$[\nu] \sqcup \flip_k\theta = [(k^k)]$. 
For the inductive step, let $(a,\nu_a) \in [\nu]$ be the position of the northern-most 
removable box. More formally, $a$ is maximal such that $\nu_1 = \cdots = \nu_a$.
Let
\begin{align*}
\nu^- = (\nu_1, \ldots, \nu_a-1, \ldots, \nu_k) 
\intertext{be the partition obtained from $\nu$ by removing this box from its Young diagram.
Let $b = k - \nu_a + 1$.
The diagram below, in which the boundary of $[\nu]$ is indicated by the thicker line, shows that $\nu \,\sqcup\, \flip_k\theta = [(k^k)]$ if and only if
$\flip_k\theta \,\cup\, \{ (a,\nu_a) \} = \flip_k\theta^+$, where}
{\theta^+} = (\theta_1, \ldots, \theta_b+1, \ldots, \theta_k). 
\end{align*}

\begin{center}
\scalebox{0.75}{\begin{tikzpicture}[x=1cm, y=-1cm]
\draw (0,0)--(8,0)--(8,8)--(0,8)--cycle;
\fill[color = blue!20] (6,1)--(5,1)--(5,2)--(6,2)--cycle;
\draw (6,1)--(5,1)--(5,2); 
\draw[very thick] (0,0)--(6,0)--(6,2)--(4,2)--(4,4)--(3.5,4);
\draw[very thick] (2.5,5)--(2,5)--(2,8)--(0,8)--(0,0);

\node at (3,4.5) {\rotatebox{90}{$\ddots$}};
\node at (5.5,1.5) {$(a,\nu_a)$};

\node at (3,0.4) {$\nu_1 \!=\! \nu_a$};
\draw[->] (3.65,0.4)--(5.875,0.4);
\draw[->] (2.35,0.4)--(0.125,0.4);

\node at (5.5,5) {$\theta_b=k-a$};
\draw[->] (5.5,4.65)--(5.5,2.125);
\draw[->] (5.5,5.35)--(5.5,7.875);

\node at (7.6,4) {$\theta_1$};
\draw[->] (7.6,3.65)--(7.6,0.125);
\draw[->] (7.6,4.35)--(7.6,7.875);

\node at (6.5,2.4) {$b$};
\draw[->] (6.7,2.4)--(7.875,2.4);
\draw[->] (6.3,2.4)--(5,2.4);

\node at (4,-0.4) {$k$};
\draw[->] (4.2,-0.4)--(8,-0.4);
\draw[->] (3.8,-0.4)--(0,-0.4);

\end{tikzpicture}}
\end{center}
 
\noindent We now have a chain of double implications. The second double implication 
is the inductive hypothesis for $\nu^-$, while all the rest are simple rewritings,
using $\nu_a = k-b+1$ and $\theta_b = k-a$ and that $\rho_k = (k-1,\ldots, 1,0)$:
\begin{align*}
[\nu] &\sqcup \flip_k\theta = [(k^k)] \\ 
&\iff [\nu^-] \sqcup \flip_k\theta^+ = [(k^k)] \\
&\iff (\nu^- + \rho_k) \sqcup (\theta^+ + \rho_k) = \rho_{2k} \\
&\iff (\nu_1 + k-1, \ldots, \nu_a-1+k-a, \ldots, \nu_k) \\
&\phantom{~\iff~}\quad \sqcup 
      (\theta_1 + k - 1, \ldots, \theta_b + 1 + k - b, \ldots, \theta_k) = \rho_{2k} \\
&\iff (\nu_1 + k-1, \ldots, 2k-a-b, \ldots, \nu_k) \\
&\phantom{~\iff~}\quad \sqcup 
      (\theta_1 + k - 1, \ldots, 2k-a-b+1, \ldots, \theta_k) = \rho_{2k}  \\
&\iff (\nu_1 + k-1, \ldots, 2k-a-b+1, \ldots, \nu_k) \\
&\phantom{~\iff~}\quad \sqcup 
      (\theta_1 + k - 1, \ldots, 2k-a-b, \ldots, \theta_k) = \rho_{2k} \\
&\iff (\nu_1 + k-1, \ldots, \nu_a + k -a, \ldots, \nu_k) \\
&\phantom{~\iff~}\quad \sqcup 
      (\theta_1 + k - 1, \ldots, \theta_b + k -b, \ldots, \theta_k) = \rho_{2k} \\
&\iff (\nu + \rho_k) \sqcup (\theta + \rho_k) = \rho_{2k}.
\end{align*}
This completes the inductive step. 
\end{proof}

\begin{corollary}\label{cor:flip-alternating-poly}
    Let $\nu,\theta\in L(k,k)$. Then
    \[
    a_{(\nu+\rho_k)\sqcup(\theta+\rho_k)}=\begin{cases}
        a_{\rho_{2k}}&\text{when $[\nu]\sqcup\flip_k\theta=[(k^k)]$}\\
        0&\text{otherwise.}
    \end{cases}
    \]
\end{corollary}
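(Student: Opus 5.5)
The corollary follows from Lemma~\ref{lem:flip} together with two elementary facts about the determinants $a_\alpha(\x_{2k}) = \det(x_j^{\alpha_i})$, where $\alpha$ is any sequence of $2k$ non-negative integers. Here $(\nu+\rho_k)\sqcup(\theta+\rho_k)$ is the decreasingly sorted list of the $2k$ numbers $\nu_i+k-i$ and $\theta_i+k-i$. This matches the convention of Remark~\ref{rmk:ext-product}, where any sign from sorting is recorded separately in $\sgn_{kk}(\nu,\theta)$. The plan is to split into two cases according to whether this multiset of $2k$ exponents has repeated entries.

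\emph{Repeated exponents.} If two of the exponents coincide, the determinant $a_{(\nu+\rho_k)\sqcup(\theta+\rho_k)}$ has two equal rows and so vanishes over any field. In this case the sorted list is not strictly decreasing, so it cannot equal $\rho_{2k}$. By Lemma~\ref{lem:flip}, we then do not have $[\nu]\sqcup\flip_k\theta=[(k^k)]$, which agrees with the value $0$ in the statement.

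\emph{Distinct exponents.} Every exponent lies in $\{0,1,\ldots,2k-1\}$. Indeed, $\nu_i\le k$ and $i\ge1$ give $\nu_i+k-i\le 2k-1$, while $\nu_i\ge0$ and $i\le k$ give $\nu_i+k-i\ge 0$; the same holds for $\theta$. So if the $2k$ exponents are pairwise distinct, they must be exactly $\{0,\ldots,2k-1\}$, and the sorted list equals $\rho_{2k}$. Hence $a_{(\nu+\rho_k)\sqcup(\theta+\rho_k)}=a_{\rho_{2k}}$. By Lemma~\ref{lem:flip}, this case occurs precisely when $[\nu]\sqcup\flip_k\theta=[(k^k)]$.

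The two cases are exhaustive and correspond exactly to the two cases of the statement, so the corollary follows.

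The only real obstacle is interpretive rather than mathematical: one must read $\sqcup$ as the sorted multiset union, as defined in \S\ref{subsec:partitions}, so that no sign appears. The sign is carried separately by $\sgn_{kk}$ when the corollary is used together with Remark~\ref{rmk:ext-product}.
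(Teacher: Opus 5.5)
Your proof is correct and is essentially the paper's argument. The paper likewise observes that the $2k$ exponents lie in $\{0,\ldots,2k-1\}$ (phrased as $(\nu+\rho_k)\sqcup(\theta+\rho_k)\in L(2k,2k-1)$), so the determinant is nonzero only when the exponents are distinct, that is, equal to $\rho_{2k}$, and then invokes Lemma~\ref{lem:flip}; your version simply spells out the exponent bounds and the two cases more explicitly.
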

\begin{proof}
    Note that $(\nu+\rho_k)\sqcup(\theta+\rho_k)\in L(2k, 2k-1)$. The alternating polynomial $a_{(\nu+\rho_k)\sqcup(\theta+\rho_k)}$ is non-zero if and only if $(\nu+\rho_k)\sqcup(\theta+\rho_k)$ has all parts distinct, which is the case only when it equals $\rho_{2k}$. By Lemma~\ref{lem:flip}, this is equivalent to $[\nu]\sqcup\flip_k\theta=[(k^k)]$.
\end{proof}

\subsubsection*{Construction and surjectivity of $\varphi$}
By the polynomial interpretation of one-row Weyl modules, we consider the map~$\varphi$ constructed as
\begin{align}\label{eq:def-phi}
    \varphi:\quad \Sym_{(n,m)}\Sym^k\E &\to
    \Sym_{(\tilde{n},i)}\Sym^k\E \notag\\
    \Lambda_{\le k}[\x_n]\otimes \Lambda_{\le k}[\z_m] &\to 
    \Lambda_{\le k}[\y_{\tilde{n}}]\otimes \Lambda_{\le k}[\mathbf{w}_i] \\ 
    s_\lambda\otimes s_\mu 
    &\mapsto 
    \sum_{\substack{
    \nu,\theta \colon [\nu]\hskip1pt\sqcup\hskip1pt\flip_k\theta = [(k^k)]\notag\\
    \nu \subseteq \lambda \subseteq \nu\hskip1pt\sqcup\hskip1pt(k^{\tilde{n}})\notag\\
    \theta \subseteq \mu \subseteq \theta\hskip1pt\sqcup\hskip1pt(k^i)
    }}
    \sgn_{kk}(\nu,\theta)s_{\lambda/\nu} \otimes s_{\mu / \theta},
\end{align}
where $\sgn_{kk}(\nu,\theta)$ is defined in Remark~\ref{rmk:ext-product}. 
As we shall see, by following the computations in the proof of Lemma~\ref{lem:phi-surj-in-thm1.1}, the map $\varphi$ arises as the composition $$(\wr_{2k}^{-1}\otimes 1)(\extproduct_{(2^k)}\otimes 1)(\wr_k\otimes 1)^{\otimes 2}(\symcoproduct_{k\tilde{n}}\otimes\symcoproduct_{ki}),$$ thus it is a well-defined $\SL_2(\F)$-homomorphism.
(Here $\extproduct$ and $\symcoproduct$ are the product on exterior powers and coproduct on symmetric powers defined in~\S\ref{subsec:productCoproduct}.) 
To show surjectivity we establish the stronger property that the restriction to $\Sym_{(n,i)}\Sym^k\E$ is surjective.

\begin{remark}\ytableausetup{boxsize=.5em}
    There are many partitions involved in the definition of the image of $\varphi$. They fit nicely in the following puzzle, which we illustrate with an example for $k=5$, $i=2$, and $\tilde{n} = 4$. We begin with two partitions $\nu = (4,4,3,1)$ and $\theta = (5,3,2,2,1)$ such that $\nu\sqcup\flip_5\theta=[(5^5)]$:
    \[ 
    \ytableausetup{boxsize=.85em}
    \ydiagram{5,5,5,5,5}*[*(lightgray!50)]{4+1,4+1,3+2,1+4,5}*[*(gray)]{4,4,3,1}\,.
    \]
    Next, we consider a partition $\lambda \in L(n,k)$ such that 
    $\nu \subseteq \lambda \subseteq \nu\sqcup(k^{\tilde{n}}) = \nu\sqcup(5^4)$, which means that
    $[\lambda/\nu]$ fits in this smaller area marked with $\ytableaushort{\cdot}$s below, in which every column has height $\tilde{n} = 4$:
    \[
    \ydiagram{5,5,5,5,5,5,5}*[\cdot]{4+1,4+1,3+2,1+4,4,4,3,1}*[*(gray)]{4,4,3,1}*[*(lightgray!50)]{0,0,0,0,4+1,4+1,3+2,1+4,5}\,.
    \]
    On the flip side, we have $\theta \subseteq \mu \subseteq \theta\sqcup(k^i) = \theta\sqcup(5^2)$. And so we conclude that the reflection of $[\mu/\theta]$ along the main antidiagonal of $[(k^k)]$ (which we denote $\flip_k(\mu/\theta)$ for consistency) fits into the following area marked with $\ytableaushort{\bullet}$s, 
    in which every row has width $i = 2$:
    \[
    \ydiagram{7,7,7,7,7,2+5,2+5}*[\cdot]{6+1,6+1,5+2,3+4,2+4,2+4,2+3,2+1}*[\bullet]{4+2,4+2,3+2,1+2,2}*[*(gray)]{4,4,3,1}*[*(lightgray!50)]{0,0,0,0,6+1,6+1,5+2,3+4,2+5}\,.
    \]
    In summary: the $\ydiagram[*(gray)]{1}$ area is (a translation of) $[\nu]$, the $\ydiagram[*(lightgray!50)]{1}$ area is (a translation of) $\flip_k\theta$, the $\ytableaushort{\cdot}$ area is a super-set of $[\lambda/\nu]$ and the $\ytableaushort{\bullet}$ area is a super-set of $\flip_k(\mu/\theta)$. This diagram depends only on $n, i, \tilde{n}$ and $\nu$. 
\end{remark}

In the following lemma we see 
\(\Lambda_{\le k}[\z_i] = \langle s_\tau \mid \tau \in L(i,k)\rangle\)
as a subspace of $\Lambda_{\le k}[\z_m] = \langle s_\tau \mid \tau \in L(m,k)\rangle$.
\begin{lemma}\label{lem:surjective}
    The restriction of $\varphi$ to
    \begin{align*}
    \Lambda_{\le k}[\x_n]\otimes \Lambda_{\le k}[\z_i] &\to 
    \Lambda_{\le k}[\y_{\tilde{n}}]\otimes \Lambda_{\le k}[\mathbf{w}_i] \\ 
    s_\lambda\otimes s_\mu 
    &\mapsto 
    \sum_{\substack{
    \nu,\theta \colon [\nu]\hskip1pt\sqcup\hskip1pt\flip_k\theta = [(k^k)]\\
    \nu \subseteq \lambda \subseteq \nu\hskip1pt\sqcup\hskip1pt(k^{\tilde{n}})\\
    \theta \subseteq \mu \subseteq \theta\hskip1pt\sqcup\hskip1pt(k^i)
    }}
    \sgn_{kk}(\nu,\theta)s_{\lambda/\nu} \otimes s_{\mu / \theta}
    \end{align*}
    is a surjective linear map.
\end{lemma}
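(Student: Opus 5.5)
The plan is to use a unitriangularity argument. For each target basis vector $s_\alpha(\y_{\tilde n})\otimes s_\beta(\mathbf{w}_i)$, with $\alpha\in L(\tilde n,k)$ and $\beta\in L(i,k)$, I will pick an explicit element of the domain. Its image under the restricted $\varphi$ should equal $\pm s_\alpha\otimes s_\beta$ plus terms whose second tensor factor has strictly smaller degree. Surjectivity then follows by induction on $|\beta|$.

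\textbf{The preimage.} Given $\alpha,\beta$, take $\mu=\beta\in L(i,k)$. Take $\lambda=(k^k)\sqcup\alpha$, the partition with $k$ full rows of length $k$ followed by the rows of $\alpha$. It has length at most $k+\tilde n=n$ and first part at most $k$, so $\lambda\in L(n,k)$ and $s_\lambda\otimes s_\beta$ lies in the domain.

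\textbf{The leading term.} In the sum defining $\varphi(s_\lambda\otimes s_\beta)$, consider the pair $\nu=(k^k)$, $\theta=\varnothing$.
\begin{itemize}
\item Since $[(k^k)]\sqcup\flip_k\varnothing=[(k^k)]$, the pair satisfies the first condition.
\item We have $\nu\subseteq\lambda\subseteq\nu\sqcup(k^{\tilde n})$, because $\alpha$ has at most $\tilde n$ rows of length at most $k$.
\item Trivially $\varnothing\subseteq\beta\subseteq(k^i)$.
\end{itemize}
The skew shape $\lambda/(k^k)$ is a translate of $[\alpha]$, so $s_{\lambda/\nu}=s_\alpha$, and $s_{\beta/\varnothing}=s_\beta$. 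The sign $\sgn_{kk}((k^k),\varnothing)$ is $\pm1$: the parts of $((k^k)+\rho_k)\sqcup\rho_k$ are $2k-1,\dots,k$ together with $k-1,\dots,0$, which are distinct. So this pair contributes $\pm s_\alpha\otimes s_\beta$.

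\textbf{The remaining terms.} Every other admissible pair $(\nu,\theta)$ has $\theta\neq\varnothing$. Indeed, $\theta=\varnothing$ forces $[\nu]=[(k^k)]$ by the flip condition. Since $\theta\subseteq\beta$, each such term is $\pm s_{\lambda/\nu}\otimes s_{\beta/\theta}$ with $|\beta/\theta|<|\beta|$.

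I then expand $s_{\lambda/\nu}(\y_{\tilde n})$ and $s_{\beta/\theta}(\mathbf w_i)$ in the Schur bases via Littlewood--Richardson coefficients. These are homogeneous symmetric polynomials, and they lie in $\Lambda_{\le k}$: every variable has degree at most the width $k$ of the skew shape, since each row of a semistandard tableau contains any given entry at most once per column, so at most $k$ times. Hence they are combinations of $s_{\alpha'}\otimes s_{\beta'}$ with $\alpha'\in L(\tilde n,k)$, $\beta'\in L(i,k)$ and $|\beta'|=|\beta/\theta|<|\beta|$.

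\textbf{Induction.} Order the target basis by $|\beta|$. The transition matrix from the chosen preimages to the basis is block unitriangular up to sign. By induction on $|\beta|$, every $s_\alpha\otimes s_\beta$ lies in the image. The base case is $\beta=\varnothing$: then $\theta\subseteq\beta$ forces $\theta=\varnothing$, so the image is exactly $\pm s_\alpha\otimes s_\varnothing$.

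The only step needing care is the degree bound placing the lower-order terms back inside $\Lambda_{\le k}$. If that is awkward to argue directly, I would instead note that they lie in the codomain because $\varphi$ is a well-defined map into $\Lambda_{\le k}[\y_{\tilde n}]\otimes\Lambda_{\le k}[\mathbf w_i]$. I expect no serious obstacle beyond bookkeeping of this triangularity.
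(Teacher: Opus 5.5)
Your proposal is correct and follows the paper's argument: the paper also takes the preimage $s_{\pi\sqcup(k^k)}\otimes s_\mu$, isolates the leading pair $\nu=(k^k)$, $\theta=\varnothing$ (whose sign is in fact $+1$), and uses $\theta\neq\varnothing$ for every other admissible pair to place the remaining terms in $\Lambda_{\le k}[\y_{\tilde n}]\otimes\langle s_\tau \mid \tau\in L(i,k),\ |\tau|<|\mu|\rangle$. The only difference is cosmetic: the paper inducts over $L(i,k)$ in lexicographic order, deducing $\tau\prec\mu$ from $\tau\subseteq\mu$ and $|\tau|<|\mu|$, whereas your induction on $|\beta|$ is slightly more direct. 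One small wording fix in your degree bound: it is the columns, being strictly increasing, that contain each entry at most once, and the skew shape has at most $k$ columns.
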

\begin{proof}
    We proceed by induction on $\mu \in L(i,k)^{\preceq}$, where the ordering $\preceq$ is lexicographic, to show that the restriction of $\varphi$ to
    \[
    \Lambda_{\le k}[\x_n] \otimes \langle s_{\tau} \mid \tau \in L(i,k), ~\tau \preceq \mu\rangle
    \twoheadrightarrow
    \Lambda_{\le k}[\y_{\tilde{n}}] \otimes \langle s_{\tau} \mid \tau \in L(i,k), ~\tau \preceq \mu\rangle,
    \]
    is surjective. For $\mu=(k^i)$ this gives the desired conclusion.
    
    When $\mu=\varnothing$, we have only one pair $(\nu, \theta)$ in the sum, namely $\nu = (k^k)$ and $\theta=\varnothing$ with $\sgn_{kk}((k^k),\varnothing)=1$.
    Thus,
    \[
    \varphi(s_\lambda \otimes 1)= s_{\lambda / (k^k)} \otimes 1.
    \]
    Let $\lambda$ range over $\{\lambda\in L(n,k) \mid (k^k)\subseteq \lambda\}$. 
    Each such partition can be written as $\lambda = \pi \sqcup (k^k)$ for some $\pi\in L(\tilde{n},k)$. We have $s_{\lambda / (k^k)} = s_\pi$. Therefore, the image of $\Lambda_{\le k}[\x_n]\otimes\F$ is  $\Lambda_{\le k}[\y_{\tilde{n}}]\otimes \F$.

    For the induction step, suppose that
    \[
    \Lambda_{\le k}[\x_n] \otimes \langle s_{\tau} \mid \tau \in L(i,k), ~\tau \prec \mu\rangle
    \twoheadrightarrow
    \Lambda_{\le k}[\y_{\tilde{n}}] \otimes \langle s_{\tau} \mid \tau \in L(i,k), ~\tau \prec \mu\rangle
    \]
    and we want to show that the same holds if the symbol $\prec$ is changed for $\preceq$ in both the domain and codomain.
    For this, consider the image of an element $s_\lambda\otimes s_\mu$ for a partition $\lambda$ of the form $\pi \sqcup (k^k)$ as above:
    \begin{align}\label{eq:s_pi-s_mu}
    s_{\pi \sqcup (k^k)}\otimes s_\mu \mapsto
    s_{\pi} \otimes s_\mu +
    \sum_{(\nu,\theta)\ne((k^k),\varnothing)} \sgn_{kk}(\nu,\theta)s_{(\pi \sqcup (k^k))/\nu} \otimes s_{\mu/\theta},
    \end{align}
    where, again, the coefficient at the leading term is $1=\sgn_{kk}((k^k),\varnothing)$.
    We shall now show that $\sum_{(\nu,\theta)\ne((k^k),\varnothing)} \sgn_{kk}(\nu,\theta)s_{\lambda/\nu} \otimes s_{\mu/\theta}$ is in $$\Lambda_{\le k}[\y_{\tilde{n}}] \otimes \langle s_{\tau} \mid \tau \in L(i,k), ~\tau \prec \mu\rangle.$$ 
    Since $(\pi \sqcup (k^k))/\nu\subseteq(\nu\sqcup(k^{\tilde{n}}))/\nu$ we have that $s_{(\pi \sqcup (k^k))/\nu}$ is a symmetric polynomial in $\tilde{n}$ variables, each of degree at most $k$. Therefore,
    $s_{(\pi \sqcup (k^k))/\nu}\in\Lambda_{\leq k}[\y_{\tilde{n}}]$. 
    On the other hand, write $s_{\mu/\theta} = \sum_\tau c^\mu_{\theta\tau}s_\tau$. If the coefficient $c^\mu_{\theta\tau}$ is nonzero, then $\tau$ is contained in $\mu$ and therefore $\tau \preceq \mu$. Moreover, since $\theta\neq\varnothing$, we have $|\tau|=|\mu|-|\theta|<|\mu|$, and hence $\tau\prec\mu$. Hence
    \(
    s_{\mu/\theta}\in\langle s_{\tau} \mid \tau \in L(i,k), ~\tau \prec \mu\rangle.
    \)
    Thus, by induction, the sum in~\eqref{eq:s_pi-s_mu} is in the image of $\varphi$, and consequently so is $s_\pi\otimes s_\mu$.
    Altogether we have shown that for any fixed $\mu$
    \[
    \langle s_\lambda \otimes s_\mu  \mid \lambda\in L(n,k), ~ (k^k)\subseteq \lambda \rangle \to 
    \Lambda_{\le k}[\y_{\tilde n}] \otimes \langle s_{\tau} \mid \tau \in L(i,k), ~\tau \preceq \mu\rangle.
    \]
    We conclude
    \[
    \Lambda_{\le k}[\x_n] \otimes \langle s_{\tau} \mid \tau \in L(i,k), ~\tau \preceq \mu\rangle
    \twoheadrightarrow
    \Lambda_{\le k}[\y_{\tilde{n}}] \otimes \langle s_{\tau} \mid \tau \in L(i,k), ~\tau \preceq \mu\rangle,
    \]
    as claimed.
\end{proof}

Since the restriction of $\varphi$ above is surjective, and has the same codomain as $\varphi$, we immediately deduce the following corollary. 

\begin{corollary}\label{cor:phi-surj}
    The map $\varphi$ defined in \eqref{eq:def-phi} is surjective.
\end{corollary}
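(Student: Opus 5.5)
The statement is immediate from Lemma~\ref{lem:surjective}. The plan is to factor the restriction of $\varphi$ through an inclusion and compare images.

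First I would make the inclusion precise. Since $i \le m$, we have $L(i,k) \subseteq L(m,k)$. For $\tau \in L(i,k)$, the polynomial $s_\tau(\z_m)$ is a basis element of $\Lambda_{\le k}[\z_m]$. So the span $\langle s_\tau \mid \tau\in L(i,k)\rangle$ is a subspace of the domain factor $\Lambda_{\le k}[\z_m]$, and the identification made before Lemma~\ref{lem:surjective} is legitimate. Tensoring with $\Lambda_{\le k}[\x_n]$ then gives a subspace $\Lambda_{\le k}[\x_n]\otimes\Lambda_{\le k}[\z_i]$ of $\Sym_{(n,m)}\Sym^k\E$.

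Next I would check that the map in Lemma~\ref{lem:surjective} really is the restriction of $\varphi$ to this subspace. Both maps are defined on the basis elements $s_\lambda\otimes s_\mu$ by the same formula~\eqref{eq:def-phi}. The defining sum depends only on the partitions $\lambda,\mu$ and not on the ambient number of variables, so the two maps agree on basis elements.

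Finally, the image of the restriction is contained in the image of $\varphi$, and both live in the same codomain $\Lambda_{\le k}[\y_{\tilde n}]\otimes\Lambda_{\le k}[\mathbf{w}_i]$. Lemma~\ref{lem:surjective} says the restriction is onto this codomain, so $\varphi$ is onto as well.

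There is no real obstacle. The only point needing care is the compatibility check in the middle step.
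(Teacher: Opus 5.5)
Your proposal is correct and is essentially the paper's own argument: the paper also regards $\Sym_{(n,i)}\Sym^k\E$ as a subspace (not a submodule) of $\Sym_{(n,m)}\Sym^k\E$, and it concludes from the chain $\Sym_{(\tilde n,i)}\Sym^k\E=\varphi(\Sym_{(n,i)}\Sym^k\E)\subseteq\im\varphi\subseteq\Sym_{(\tilde n,i)}\Sym^k\E$ given by Lemma~\ref{lem:surjective}. Your explicit check that the map in the lemma is given by the same formula on the Schur basis $s_\lambda\otimes s_\mu$ with $\mu\in L(i,k)$ is a compatibility point that the paper leaves implicit.
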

\begin{proof}
    Since $\Sym_{(n,i)}\Sym^k \E\subseteq\Sym_{(n,m)}\Sym^k \E$ (as vector spaces, but not necessarily as modules), by Lemma~\ref{lem:surjective} we have
    \[
        \Sym_{(\tilde n,i)}\Sym^k E=\varphi(\Sym_{(n,i)}\Sym^k E)\subseteq\im\varphi\subseteq \Sym_{(\tilde n,i)}\Sym^k E,
    \]
    and so $\im\varphi$ equals the codomain of $\varphi$ as vector spaces, thus $\varphi$ is surjective.
\end{proof}

\section{The proof of Theorem\texorpdfstring{~\ref{thm:main}}{ 1.1}}\label{sec:proof}
\subsection{Set-up and notation}\label{sec:set-up} In this section we assume the following notation: for any fixed $i=0,1,\ldots, m-1$, let $k=m-i$ and $\tilde{n}=n-m+i=n-k$.

Theorem~\ref{thm:main} is equivalent to the existence of nested $\SL_2(\F)$-modules
\begin{equation}\label{eq:chain_Mi}
0=M_m\leq M_{m-1}\leq\dots\leq M_0=\Delta^{(n,m)}\Sym^d \E
\end{equation}
which satisfy the following diagram
of short exact sequences:
\[\begin{tikzcd}
	& {M_m} & {M_{m-1}} & {L_{m-1}} \\
	& {M_{m-2}} & \cdots \\
	{L_{m-2}} & {M_2} & {M_1} & {L_1} \\
	{L_2} & {M_0} \\
	{L_0}
	\arrow[hook', from=1-2, to=1-3]
	\arrow[two heads, from=1-3, to=1-4]
	\arrow[hook, from=1-3, to=2-2]
	\arrow[hook', from=2-2, to=2-3]
	\arrow[two heads, from=2-2, to=3-1]
	\arrow[from=2-3, to=3-2]
	\arrow[hook', from=3-2, to=3-3]
	\arrow[two heads, from=3-2, to=4-1]
	\arrow[two heads, from=3-3, to=3-4]
	\arrow[hook, from=3-3, to=4-2]
	\arrow[two heads, from=4-2, to=5-1]
\end{tikzcd}\]
where $L_i = \Sym_{n+m}\Sym^{d-k}\E\otimes\Delta^{(\tilde{n},i)}\Sym^{k}\E$ are the filtration layers from Theorem~\ref{thm:main}.

In order to define the modules $M_i$, we first construct maps 
\[
\mathcal{M}_i : M_i \to \tilde{L}_i
\]
where for $0\le i \le m-1$ we define
\[
\tilde L_i= \Sym_{2k}\Sym^{d-k}\E\otimes\Delta^{(\tilde{n},i)}\Sym^d\E.
\] 
In a later section, we show that the image of $M_i$ lands inside an isomorphic copy of $L_i$ inside $\tilde{L}_i$ (see \S\ref{subsec:tilde-L}).
These maps $\mathcal{M}_i$ are given by the diagram in Figure~\ref{big diagram}; we now proceed to construct them.
Consider the map $\what{\mathcal{M}}_i:\Sym_{(n,m)}\Sym^d E\to \tilde L_i$ given by a composition
\[
\what{\mathcal{M}}_i=(\wr_{2k}^{-1}\otimes 1)(\extproduct_{(2^k)}\otimes \psi_{\tilde ni})(\wr_k\otimes 1)^{\otimes 2}(\symcoproduct_{k\tilde n}\otimes \symcoproduct_{ki}).
\]
Here the maps $\extproduct$ and $\symcoproduct$ are the product for exterior powers and
the coproduct for symmetric powers defined in \S\ref{sec:tensor spaces}, the map $\psi$ in \S\ref{sec:image} and the map $\wr$ is the isomorphism from \eqref{eq:wr as poly}. All of the above are $\SL_2(\F)$-equivariant, hence so is $\what{\mathcal{M}}_i$. By Proposition~\ref{p:wr-U_nn-ker-mu}, $\what{\mathcal{M}}_i(U_{nm})=0$, so by definition of Weyl modules from \S\ref{sec:weyl modules as quotients},~$\what{\mathcal{M}}_i$ descends to the homomorphism 
\[
\overline{\mathcal{M}}_i:\Delta^{(n,m)}\Sym^dE\to \tilde L_i.
\]

The modules $M_i$ and homomorphisms $\mathcal{M}_i$ are defined inductively, by setting $M_0=\Delta^{(n,m)}\Sym^d E$, and for $0\le i \le m-1$:
\[
\mathcal{M}_i=\overline{\mathcal{M}}_i|_{M_i}\qquad \text{and} \qquad  M_{i+1}=\ker(\mathcal{M}_{i}).
\]

\smallskip
\begin{figure}
\hspace*{-0.3in}\begin{adjustbox}{scale=1.2}
    \begin{tikzcd}
        {\scriptstyle M_{i}}
         \\
        {
        \begin{tikzpicture}[x=.45cm,y=.45cm,baseline=.5cm]
        \draw (0,1) rectangle ++ (5,-1);
        \draw (0,2) rectangle ++ (7,-1);
            \node(13) at (0.5, .5){\tiny$y_{21}$};
            \node(14) at (1.5, .5){\tiny$\cdots$};
            \node(14) at (2.5, .5){\tiny$y_{2k}$};
            \node(14) at (3.5, .5){\tiny$\cdots$};
            \node(14) at (4.5, .5){\tiny$y_{2m}$};
            \node(13) at (0.5, 1.5){\tiny$y_{11}$};
            \node(14) at (1.5, 1.5){\tiny$\cdots$};
            \node(14) at (2.5, 1.5){\tiny$y_{1k}$};
            \node(14) at (3.5, 1.5){\tiny$\cdots$};
            \node(14) at (4.5, 1.5){\tiny$y_{1m}$};
            \node(14) at (5.5, 1.5){\tiny$\cdots$};
            \node(14) at (6.5, 1.5){\tiny$y_{1n}$};
          \end{tikzpicture}_{{\hspace{-2.85em}\scriptscriptstyle d\hspace{2.2em}}} / \scriptstyle  U_{nm}} &
        \begin{tikzpicture}[x=.45cm,y=.45cm,baseline=.2cm]
        \draw 
            (0,1) rectangle ++ (3,-1);
            \node(13) at (0.5, .5){\tiny$x_{11}$};
            \node(14) at (1.5, .5){\tiny$\cdots$};
            \node(14) at (2.5, .5){\tiny$x_{2k}$};
        \end{tikzpicture}_{\!\!\scriptscriptstyle d-k}
        \scriptstyle \otimes 
        \Delta^{(\tilde{n},i)}\Sym^d\E
        \\
        {
        \begin{tikzpicture}[x=.45cm,y=.45cm,baseline=.5cm]
        \draw (0,1) rectangle ++ (3,-1);
        \draw (0,2) rectangle ++ (3,-1);
            \node(13) at (0.5, 1.5){\tiny$x_{11}$};
            \node(14) at (1.5, 1.5){\tiny$\cdots$};
            \node(14) at (2.5, 1.5){\tiny$x_{1k}$};
            \node(13) at (0.5, .5){\tiny$x_{21}$};
            \node(14) at (1.5, .5){\tiny$\cdots$};
            \node(14) at (2.5, .5){\tiny$x_{2k}$};
        \end{tikzpicture}_{\!\!\scriptscriptstyle d}
        \begin{tikzpicture}[x=.45cm,y=.45cm,baseline=.5cm]
        \draw (0,1) rectangle ++ (3,-1);
        \draw (0,2) rectangle ++ (5,-1);
            \node(13) at (0.5, 1.5){\tiny$y_{11}$};
            \node(14) at (1.5, 1.5){\tiny$\cdots$};
            \node(14) at (2.5, 1.5){\tiny$y_{1i}$};
            \node(14) at (3.5, 1.5){\tiny$\cdots$};
            \node(14) at (4.5, 1.5){\tiny$y_{1\tilde{n}}$};
            \node(13) at (0.5, .5){\tiny$y_{21}$};
            \node(14) at (1.5, .5){\tiny$\cdots$};
            \node(14) at (2.5, .5){\tiny$y_{2i}$};
        \end{tikzpicture}_{{\hspace{-2.85em}\scriptscriptstyle d\hspace{2.2em}}} /\scriptstyle U_{nm}
        } & {
        \begin{tikzpicture}[x=.45cm,y=.45cm,baseline=-.7cm,rotate=-90]
        \draw (0,1) rectangle ++ (3,-1);
            \node(13) at (0.5, .5){\tiny$x_{11}$};
            \node(14) at (1.8, .5){\tiny$\vdots$};
            \node(14) at (2.5, .5){\tiny$x_{2k}$};
        \end{tikzpicture}_{\!\!\scriptscriptstyle d+k-1}}
       \scriptstyle \hspace{-2em}\otimes
        \Delta^{(\tilde{n},i)}\Sym^d\E
        \\
        {        
        \begin{tikzpicture}[x=.45cm,y=.45cm,baseline=-.4cm,rotate=-90]
        \draw (0,1) rectangle ++ (3,-1);
        \draw (0,2) rectangle ++ (3,-1);
            \node(13) at (0.5, .5){\tiny$x_{11}$};
            \node(14) at (1.8, .5){\tiny$\vdots$};
            \node(14) at (2.5, .5){\tiny$x_{1k}$};
            \node(13) at (0.5, 1.5){\tiny$x_{21}$};
            \node(14) at (1.8, 1.5){\tiny$\vdots$};
            \node(14) at (2.5, 1.5){\tiny$x_{2k}$};
        \end{tikzpicture}_{\!\!\scriptscriptstyle d+k-1}\hspace{-2.7em}
        \begin{tikzpicture}[x=.45cm,y=.45cm,baseline=.5cm]
        \draw (0,1)--(3,1)--(3,0)--(0,0); 
        \draw (0,2)--(5,2)--(5,1)--(0,1); 
            \node(13) at (0.5, 1.5){\tiny$y_{11}$};
            \node(14) at (1.5, 1.5){\tiny$\cdots$};
            \node(14) at (2.5, 1.5){\tiny$y_{1i}$};
            \node(14) at (3.5, 1.5){\tiny$\cdots$};
            \node(14) at (4.5, 1.5){\tiny$y_{1\tilde{n}}$};
            \node(13) at (0.5, .5){\tiny$y_{21}$};
            \node(14) at (1.5, .5){\tiny$\cdots$};
            \node(14) at (2.5, .5){\tiny$y_{2i}$};
        \end{tikzpicture}_{{\hspace{-2.85em}\scriptscriptstyle d\hspace{2.2em}}}
        \hspace{5em}
        \begin{tikzpicture}[remember picture, overlay]
            \node (*) at (-2.8em,-1.5em) {$\scriptstyle /(\wr_k\otimes 1)^{\scriptscriptstyle \otimes 2}U_{nm}$};
        \end{tikzpicture}
        }
        & {
        \begin{tikzpicture}[x=.45cm,y=.45cm,baseline=-.4cm,rotate=-90]
        \draw (0,1) rectangle ++ (3,-1);
        \draw (0,2) rectangle ++ (3,-1);
            \node(13) at (0.5, .5){\tiny$x_{11}$};
            \node(14) at (1.8, .5){\tiny$\vdots$};
            \node(14) at (2.5, .5){\tiny$x_{1k}$};
            \node(13) at (0.5, 1.5){\tiny$x_{21}$};
            \node(14) at (1.8, 1.5){\tiny$\vdots$};
            \node(14) at (2.5, 1.5){\tiny$x_{2k}$};
        \end{tikzpicture}_{\scriptscriptstyle \!\!d+k-1}\hspace{-2.7em}
        \begin{tikzpicture}[x=.45cm,y=.45cm,baseline=.5cm]
        \draw (0,1)--(3,1)--(3,0)--(0,0); 
        \draw (0,2)--(5,2)--(5,1)--(0,1); 
            \node(13) at (0.5, 1.5){\tiny$y_{11}$};
            \node(14) at (1.5, 1.5){\tiny$\cdots$};
            \node(14) at (2.5, 1.5){\tiny$y_{1i}$};
            \node(14) at (3.5, 1.5){\tiny$\cdots$};
            \node(14) at (4.5, 1.5){\tiny$y_{1\tilde{n}}$};
            \node(13) at (0.5, .5){\tiny$y_{21}$};
            \node(14) at (1.5, .5){\tiny$\cdots$};
            \node(14) at (2.5, .5){\tiny$y_{2i}$};
        \end{tikzpicture}_{{\,\scriptscriptstyle\hspace{-3.05em}d\hspace{2.2em}}} 
        \hspace{5em}
        \begin{tikzpicture}[remember picture, overlay]
            \node (*) at (-2.6em,-1.5em) {$\scriptstyle /\ker(\extproduct_{\scriptscriptstyle (2^k)}\otimes\,\psi_{\tilde n i})$};
        \end{tikzpicture}
        }
        \arrow[hook', from=1-1, to=2-1]
        \arrow["\symcoproduct_{k\tilde{n}}\otimes\symcoproduct_{ki}"', hook', from=2-1, to=3-1]
        \arrow["{(\wr_{k}\otimes1)^{\otimes2}}"'{inner sep=.8ex}, two heads, hook', from=3-1, to=4-1]
        \arrow["{\wr_{2k}^{-1}\otimes1}"'{inner sep=.8ex}, two heads, hook, from=3-2, to=2-2]
        \arrow["\extproduct_{(2^k)}\otimes\psi_{\tilde{n}i}"'{inner sep=.8ex}, two heads, hook, from=4-2, to=3-2]
        \arrow[two heads, from=4-1, to=4-2]
        \arrow["\scriptstyle \mathcal{M}_i", dashed, from=1-1, to=2-2]
    \end{tikzcd}
\end{adjustbox}
\caption{The restriction $\mathcal{M}_i$ of the composition $\hmi$ to $M_i$. The rectangular shapes introduced in \S\ref{sec:tensor spaces} stand for symmetric and exterior powers. The maps $\symcoproduct$ and $\extproduct$ are the coproduct of symmetric powers and the product of exterior powers defined in \S\ref{subsec:productCoproduct}, the map $\psi$ is the composition from Definition~\ref{def:weyl-as-image}.
We write simply 
$U_{nm}$ for $(\symcoproduct_{k\tilde n}\otimes\symcoproduct_{ki})U_{nm}$
(see Remark~\ref{remark:ExtProd}).
}
\label{big diagram}
\end{figure}

\medskip\noindent

\subsection{Proof outline}
We pause here to assess what remains to complete the proof. In the previous section we defined a nested sequence of $\SL_2(\F)$-modules 
\[
M_m\le M_{m-1}\le \dots \le M_0=\Delta^{(n,m)}\Sym^d E.
\]
We will show that $M_m=0$ in the concluding dimension counting argument in \S\ref{sec:dimension}.

As we pointed out, we need to show that for $i=0,1,\ldots,m-1$ there exists a short exact sequence
\[
0\to M_{i+1}\to M_i\to L_i\to 0.
\]
Note that $\tilde\pi_{(\tilde n,i), 2k, k,d-k}$ has domain $L_i$ and codomain $\tilde L_i$, so by Corollary~\ref{cor:tilde-pi},
\[
L_i\cong \tilde\pi(L_i)\leq \tilde L_i,
\]
and, by definition, $M_{i+1}=\ker(\mathcal{M}_i)$. To conclude the proof of Theorem~\ref{thm:main} it therefore remains to show that $\im(\mathcal{M}_i)=\tilde\pi(L_i)\cong L_i$. Indeed, this will show that the short exact sequence
\[
0\to\ker(\mathcal{M}_i)\to M_i\to \im(\mathcal{M}_i)\to 0
\]
is precisely
\[
0\to M_{i+1}\to M_i\to L_i\to 0.
\]
We will show that $\im\mathcal{M}_i=\tilde\pi(L_i)$ in two steps, first arguing that $\tilde\pi(L_i)$ is contained in the image of $\mathcal{M}_i$ (viewed just as a linear map of $\F$-vector spaces) and then concluding by dimension counting.

\subsection{\texorpdfstring{$\tilde\pi(L_i)$}{π(Li)} is a subspace of \texorpdfstring{$\im\mathcal{M}_i$}{im Mi}}
\label{subsec:tilde-L}
In this subsection we turn to consider $\mathcal{M}_i$ primarily as a linear map to deduce that $\mathcal{M}_i(M_i)$ \emph{contains} $\tilde\pi(L_i)$. 
Recall the $\pi$ map from Definition~\ref{def:pi-map} and consider the $\SL_2(\F)$-module
\[
S_i=(\pi_{(n,m),0,k,d-k}(A_i\otimes B_i)+U_{nm})/U_{nm}\le \Delta^{(n,m)}\Sym^d E,
\]
where
\[
A_i = 
\Sym_{n+m}\Sym^{d-k}\E
\quad\text{and}\quad
B_i = 
\Sym_{(n,m)}\Sym^k\E\,.
\]
\begin{lemma}
    $S_i$ is a submodule of $M_i$.
\end{lemma}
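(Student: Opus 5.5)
The plan is to show by induction on $j$ that $S_i\subseteq M_j$ for $0\le j\le i$. The case $j=0$ is trivial, since $M_0=\Delta^{(n,m)}\Sym^d\E$. Because $M_{j+1}=\ker(\overline{\mathcal{M}}_j|_{M_j})$, the inductive step reduces to a single claim: for every $j<i$ the homomorphism $\overline{\mathcal{M}}_j$, which is defined on all of $\Delta^{(n,m)}\Sym^d\E$, vanishes on $S_i$. Equivalently, $\what{\mathcal{M}}_j$ kills $\pi_{(n,m),0,k,d-k}(A_i\otimes B_i)$. That $S_i$ is an $\SL_2(\F)$-submodule is immediate, because $\pi$ is equivariant (Proposition~\ref{p:injectivity pi}) and $U_{nm}$ is a submodule.

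For the key claim, fix $j<i$ and write $k'=m-j$, so that $k'>k$. By linearity it suffices to treat an element $P(\z)Q(\z)$. Here $P$ is symmetric in all $n+m$ variables, with degree at most $d-k$ in each variable. The factor $Q$ lies in $\Sym_{(n,m)}\Sym^k\E$, so it has degree at most $k$ in each variable. Apply $\symcoproduct_{k'\tilde n'}\otimes\symcoproduct_{k'j}$, where $\tilde n'=n-k'$. This splits the variables into $2k'$ variables $x_{11},\dots,x_{1k'},x_{21},\dots,x_{2k'}$ and the remaining $y$-variables. Expanding $Q$ as a sum of products $f(\x)g(\y)$, the element becomes a sum of terms $P(\x,\y)f(\x)g(\y)$.

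Next I would use Lemma~\ref{lem:commute}, as in the proof of Proposition~\ref{p:wr-U_nn-ker-mu}, to rewrite $(\extproduct_{(2^{k'})}\otimes\psi_{\tilde n' j})(\wr_{k'}\otimes1)^{\otimes2}$. On the $\x$-variables it becomes the full signed antisymmetrisation $\sum_{\pi\in\SymG_{2k'}}\sgn(\pi)\,\pi$, applied after multiplying by $\x_{1,*}^{\rho_{k'}}\x_{2,*}^{\rho_{k'}}$. On the $\y$-variables it is the operator $\psi_{\tilde n' j}$.

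Since $P$ is symmetric in all variables, it commutes with both group-algebra operators (compare Lemma~\ref{lem:mu-pi-commute}), so it factors out. Each term is then $P$ times $\bigl(\sum_\pi\sgn(\pi)\,\pi\cdot \x^{\rho_{k'}}\x^{\rho_{k'}} f(\x)\bigr)\cdot\psi(g(\y))$. The middle factor is an alternating polynomial in $2k'$ variables. Each of these variables has degree at most $k+k'-1$ in it.

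Now use that a nonzero alternating polynomial in $2k'$ variables has degree at least $2k'-1$ in some variable. This holds because the basis elements $a_{\lambda+\rho_{2k'}}$ have top exponent at least $2k'-1$, and it is valid in every characteristic with the definition of alternating used in the paper. Since $k<k'$, we have $k+k'-1<2k'-1$, so the middle factor vanishes. Hence $\what{\mathcal{M}}_j(P Q)=0$, before $\wr_{2k'}^{-1}$ is even applied.

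The main obstacle is bookkeeping: checking carefully that multiplication by $P$ really commutes with $\extproduct_{(2^{k'})}$ and $\psi_{\tilde n' j}$ after the coproducts and Wronskians. The point is that these are all either multiplication by polynomials, or group-algebra elements permuting variables among which $P$ is symmetric. One also has to confirm that the degree bound on $f$ survives the coproduct, which it does because degrees are counted per variable.
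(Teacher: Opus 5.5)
Your proposal is correct and follows essentially the paper's own argument. The paper likewise inducts on $j$ and reduces to showing that $\what{\mathcal{M}}_j$ kills $\pi(A_i\otimes B_i)$ for $j<i$. It commutes the composition past $\pi$, which is your factoring-out of the fully symmetric $P$, and concludes because the first tensor factor lands in $\bwedge{2k'}\Sym^{k+k'-1}\E=0$; this vanishing is exactly your degree bound $k+k'-1<2k'-1$ for alternating polynomials in $2k'$ variables.
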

\begin{proof}
By definition, $S_i$ is a submodule of $M_0=\Delta^{(n,m)}\Sym^d E$. Suppose that for $j<i$, $S_i$ is a submodule of $M_j$ and we will show that $S_i$ is a submodule of $M_{j+1}$. The claim then follows by induction. Let $k'=m-j$ and $\tilde n'=n-m+j=n-k'$. Note that $k<k'$ because $j<i$.

Recall that $M_{j+1}=\ker(\mathcal{M}_{j})$ and $\mathcal{M}_{j}$ is
obtained by reducing $\what{\mathcal{M}}_{j}$ to $\overline{\mathcal{M}}_j$ and then restricting to $M_{j}$.
It suffices then to show that $\pi(A_i\otimes B_i)$ is annihilated by
\[
\what{\mathcal{M}}_j=(\extproduct_{(2^{k'})}\otimes\psi_{(\tilde{n}',j)})
(\wr_{k'}\otimes1)^{\otimes2}
(\symcoproduct_{(k',\tilde{n}')}\otimes\symcoproduct_{(k',j)})\,,
\] 
where, as defined in \S\ref{subsec:productCoproduct}, $\extproduct$ is the product on exterior powers and
$\symcoproduct$ is the coproduct on symmetric powers.
The Wronskian $\wr$ is given by multiplication by an alternating polynomial in the variables of the second tensor factor; since $\pi$ acts as an evaluation homomorphism, this multiplication commutes with $\pi$. Together with Lemma~\ref{lem:mu-pi-commute} we obtain that the whole composition commutes with $\pi$:
\[
\what{\mathcal{M}}_j\pi(A_i\otimes B_i)=\pi(A_i\otimes \what{\mathcal{M}}_j(B_i)).
\]
We encourage the reader to recall Definition~\ref{def:pi-map} and the proof of Lemma~\ref{lem:mu-pi-commute} to see that in the second line above, the composition indeed acts only on $B_i$, leaving~$A_i$ unchanged. Indeed, $B_i$ is an $\F$-vector subspace of $\Sym_{(n,m)}\Sym^dE$, the domain of \smash{$\what{\mathcal{M}}_j$} (which is best seen through the Schur basis in polynomial interpretation), so the composition \smash{$\what{\mathcal{M}}_j$} is well-defined on the elements of $B_i$. Expanding the second tensor factor we compute explicitly (again, we refer to diagram~\eqref{big diagram} to keep track of the tensor products): 
\begin{multline*}
(\extproduct_{(2^{k'})}\otimes\psi_{\tilde{n}'j})
(\wr_{k'}\otimes1)^{\otimes2}
(\symcoproduct_{k'\tilde{n}'}\otimes\symcoproduct_{k'j})B_i
\\
\leq\extproduct_{(2^{k'})}\wr_{k'}^{\otimes 2}\Sym_{(k', k')}\Sym^k E \otimes \psi_{\tilde n' j}\Sym_{(\tilde n',j)}\Sym^k E\\
=\bwedge{2k'}\Sym^{k+k'-1}E\otimes\Delta^{(\tilde n',j)}\Sym^k E=0,
\end{multline*}
because the first tensor factor in the last line is 0, as $k+k'<2k'$. We plug this into the previous computation to conclude 
\[\what{\mathcal{M}}_j\pi(A_i\otimes B_i)=\pi(A_i\otimes 0)=0,\] 
and so $S_i$ is a submodule of $M_{j+1}$. In particular, by induction, $S_i$ is a submodule of $M_i$.
\end{proof}

\begin{lemma}\label{lem:phi-surj-in-thm1.1}
    As a linear map, the restriction $\mathcal{M}_i:S_i\to\tilde\pi(L_i)$ is surjective.
\end{lemma}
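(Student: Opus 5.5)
The plan is to compute $\mathcal{M}_i$ on $S_i$ by pushing it through $\pi$, exactly as in the proof that $S_i\le M_i$. We then recognise the result as $\tilde\pi$ applied to $A_i\otimes\varphi(\cdots)$ followed by $\psi_{\tilde n i}$. Concretely, a spanning set of $S_i$ consists of the classes of $\pi(P\otimes Q)$ with $P\in A_i=\Sym_{n+m}\Sym^{d-k}\E$ and $Q\in B_i=\Sym_{(n,m)}\Sym^k\E$. As in the previous lemma, the Wronskian multiplications commute with the evaluation $\pi$ (they multiply by polynomials in the second-factor variables). The maps $\extproduct$, $\psi$ and $\symcoproduct$ commute with $\pi$ by Lemma~\ref{lem:mu-pi-commute}. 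Hence
\[
\what{\mathcal{M}}_i\,\pi(P\otimes Q)=\pi\bigl(P\otimes (\wr_{2k}^{-1}\otimes1)(\extproduct_{(2^k)}\otimes\psi_{\tilde n i})(\wr_k\otimes1)^{\otimes2}(\symcoproduct_{k\tilde n}\otimes\symcoproduct_{ki})Q\bigr).
\]
Some care is needed with $\wr_{2k}^{-1}$. It divides by $a_{\rho_{2k}}$ in the first $2k$ variables $x_{11},\dots,x_{2k}$, which are among the variables substituted by $\pi$. Since $P$ is symmetric, division by the Vandermonde in those variables still commutes with the substitution. I would check this directly on the polynomial model.

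The key computation is the inner composite on $B_i$, i.e.\ with $\Sym^k$ instead of $\Sym^d$. Take $Q=s_\lambda\otimes s_\mu$ with $\lambda\in L(n,k)$ and $\mu\in L(m,k)$. By \eqref{eq:Schur coproduct 2}, $\symcoproduct_{k\tilde n}\otimes\symcoproduct_{ki}$ gives $\sum s_\nu\otimes s_{\lambda/\nu}\otimes s_\theta\otimes s_{\mu/\theta}$ over the appropriate $\nu,\theta\in L(k,k)$. Next, $\wr_k$ sends $s_\nu\mapsto a_{\nu+\rho_k}$. By Remark~\ref{rmk:ext-product}, $\extproduct_{(2^k)}$ gives $\sgn_{kk}(\nu,\theta)\,a_{(\nu+\rho_k)\sqcup(\theta+\rho_k)}$ (up to the reordering of variables encoded by $(2^k)$ versus $(k,k)'$, which only affects a global convention). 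By Corollary~\ref{cor:flip-alternating-poly} this is $a_{\rho_{2k}}$ exactly when $[\nu]\sqcup\flip_k\theta=[(k^k)]$, and $0$ otherwise. Applying $\wr_{2k}^{-1}$ then gives the constant $1$. Hence the first factor collapses to $1\in\Sym_{2k}\Sym^{d-k}\E$, and the composite equals $1\otimes\psi_{\tilde n i}\varphi(Q)$ with $\varphi$ as in \eqref{eq:def-phi}. This also justifies the earlier claim that $\varphi$ is this composite, so $\varphi$ is equivariant.

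Therefore $\mathcal{M}_i[\pi(P\otimes Q)]=\pi_{(\tilde n,i),2k,k,d-k}\bigl(P\otimes\psi_{\tilde n i}\varphi(Q)\bigr)=\tilde\pi\bigl(P\otimes\psi_{\tilde n i}\varphi(Q)\bigr)$. Here I must match the parameters: after the substitution, the $2k$ variables of the first factor are the leftover $\y$-variables of $\Sym_{2k}\Sym^{d-k}$. I would check carefully that the evaluation $\pi_{(n,m),0,k,d-k}$ followed by the coproduct splitting agrees with $\pi_{(\tilde n,i),2k,k,d-k}$, i.e.\ that $P$'s variables split as $2k$ "free" variables and $\tilde n+i$ substituted ones. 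This bookkeeping is the main obstacle, together with the sign and ordering conventions in $\extproduct_{(2^k)}$ versus $\extproduct_{kk}$.

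Finally, surjectivity follows because $\varphi$ is surjective by Corollary~\ref{cor:phi-surj} and $\psi_{\tilde n i}$ maps $\Sym_{(\tilde n,i)}\Sym^k\E$ onto $\Delta^{(\tilde n,i)}\Sym^k\E$ by definition. So as $P$ and $Q$ vary, the elements $P\otimes\psi_{\tilde n i}\varphi(Q)$ span $A_i\otimes\Delta^{(\tilde n,i)}\Sym^k\E=L_i$. Applying the linear map $\tilde\pi$, the image $\mathcal{M}_i(S_i)$ is exactly $\tilde\pi(L_i)$.
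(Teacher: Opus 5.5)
Your proposal is correct and follows the paper's proof step by step. You commute $\what{\mathcal{M}}_i$ past the evaluation $\pi$, compute $\what{\mathcal{M}}_i(s_\lambda\otimes s_\mu)=1\otimes\psi_{\tilde n i}\varphi(s_\lambda\otimes s_\mu)$ via the Schur coproduct, the Wronskian, Remark~\ref{rmk:ext-product} and Corollary~\ref{cor:flip-alternating-poly}, and conclude from Corollary~\ref{cor:phi-surj}. The two points you flag are harmless, and the paper passes over them too. Since $(k,k)'=(2^k)$, there is no sign discrepancy between $\extproduct_{(2^k)}$ and Remark~\ref{rmk:ext-product}. Since $P$ is symmetric in all $n+m$ variables, it does not matter which $2k$ of them are left free, so $\pi_{(n,m),0,k,d-k}$ followed by $\what{\mathcal{M}}_i$ agrees with $\pi_{(\tilde n,i),2k,k,d-k}$ applied to $P\otimes\psi_{\tilde n i}\varphi(Q)$.
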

\begin{proof}
    As in the proof above, we begin with observing that 
    \begin{align}\label{eq:Mi-pi-commute}
    \mi(S_i)=\hmi\pi(A_i\otimes B_i)=\pi(A_i\otimes \hmi(B_i)).
    \end{align}

    We now compute explicitly the image under $\hmi$ of a basis element $s_\lambda\otimes s_\mu$ of~$B_i$, where $\lambda\in L(n,k)$ and $\mu\in L(m,k)$, that is: 
    \[
        \hmi(s_\lambda\otimes s_\mu) 
        =(\wr_{2k}^{-1}\otimes 1)(\extproduct_{(2^k)}\otimes\psi_{\tilde ni})(\wr_{k}\otimes1)^{\otimes2}(\symcoproduct_{k\tilde n}\otimes\symcoproduct_{ki}) s_\lambda\otimes s_\mu\,.
    \]
    We can rewrite the right-hand side as
    \[
    (\wr_{2k}^{-1}\otimes \psi_{\tilde n i})(\extproduct_{(2^k)}\otimes1)(\wr_{k}\otimes1)^{\otimes2}(\symcoproduct_{k\tilde n}\otimes\symcoproduct_{ki}) s_\lambda\otimes s_\mu\,.
    \]
    By Remark~\ref{rem:sym-coprod-schur} and by~\eqref{eq:wr as poly} we compute
    \begin{align*}   
    (\wr_{k}\otimes1)^{\otimes2}&(\symcoproduct_{k\tilde n}\otimes\symcoproduct_{ki}) s_\lambda\otimes s_\mu
    \\
    &= (\wr_{k}\otimes1)^{\otimes2}\sum_{\substack{\nu\in L(k,k)\\\nu\subseteq\lambda\subseteq\nu\sqcup(k^{\tilde n})}}\sum_{\substack{\theta\in L(k,k)\\\theta\subseteq\mu\subseteq\theta\sqcup(k^i)}} s_\nu\otimes s_{\lambda/\nu}\otimes s_\theta\otimes s_{\mu/\theta}
    \\
    &= \sum_{\substack{\nu\in L(k,k)\\\nu\subseteq\lambda\subseteq\nu\sqcup(k^{\tilde n})}}\sum_{\substack{\theta\in L(k,k)\\\theta\subseteq\mu\subseteq\theta\sqcup(k^i)}} a_{\nu+\rho_k}\otimes s_{\lambda/\nu}\otimes a_{\theta+\rho_k}\otimes s_{\mu/\theta}\,.
    \end{align*}
    Apply $\extproduct_{(2^k)}\otimes 1$ to the resulting expression using Remark~\ref{rmk:ext-product} to obtain
    \[
    \sum_{\substack{\nu,\theta \in L(k,k)\\\nu\subseteq\lambda\subseteq\nu\sqcup(k^{\tilde n})\\\theta\subseteq\mu\subseteq\theta\sqcup(k^i)}} \sgn_{kk}(\nu,\theta) a_{(\nu+\rho_k)\sqcup( \theta+\rho_k)}\otimes s_{\lambda/\nu}\otimes s_{\mu/\theta} 
    = a_{\rho_{2k}}\otimes\varphi(s_\lambda\otimes s_\mu),
    \]
    where we have used Corollary~\ref{cor:flip-alternating-poly} with $\varphi$ as defined in~\eqref{eq:def-phi}.
    Finish by applying $\wr_{2k}^{-1}\otimes\psi_{\tilde n i}$ to get
    \begin{align}\label{eq:Mi-on-basis-Bi}
    \hmi(s_\lambda\otimes s_\mu) 
    =
    1\otimes\psi_{\tilde ni}\,\varphi(s_\lambda\otimes s_\mu)\,.
    \end{align}

    Now, combining \eqref{eq:Mi-pi-commute}, \eqref{eq:Mi-on-basis-Bi}, and the surjectivity of $\varphi$ from Corollary~\ref{cor:phi-surj}, we conclude:
    \begin{align*}
        \mi(S_i)&=\pi(A_i\otimes \hmi(B_i))=\pi(A_i\otimes \psi_{\tilde n i}\varphi(B_i))\\&=\pi(A_i\otimes \psi_{\tilde n i}(\Sym_{(\tilde n,i)}\Sym^k E))
        =\tilde \pi(A_i\otimes \Delta^{(\tilde n,i)}\Sym^kE)=\tilde\pi(L_i).\hfill\qedhere
    \end{align*}
\end{proof}

\subsection{Dimension counting and exactness}\label{sec:dimension}
\begin{proposition}\label{prop:surj-Li}
    The $\SL_2(\F)$-homomorphisms $\tilde \pi^{-1}_{(\tilde n,i),2k,k,d-k}\circ\mathcal{M}_i$ realise the surjections
    \(
    M_i\twoheadrightarrow L_i.
    \)
\end{proposition}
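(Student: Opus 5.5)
We prove that $\im\mathcal{M}_i=\tilde\pi(L_i)$ for each $0\le i\le m-1$; since $\tilde\pi=\tilde\pi_{(\tilde n,i),2k,k,d-k}$ is an injective $\SL_2(\F)$-homomorphism by Corollary~\ref{cor:tilde-pi}, the composite $\tilde\pi^{-1}\circ\mathcal{M}_i$ is then a well-defined, surjective $\SL_2(\F)$-homomorphism $M_i\twoheadrightarrow L_i$. By Lemma~\ref{lem:phi-surj-in-thm1.1} we already have $\tilde\pi(L_i)=\mathcal{M}_i(S_i)\subseteq\mathcal{M}_i(M_i)$. The reverse inclusion we obtain by dimension counting, run simultaneously over all $i$.

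The key steps are as follows. First, the chain $M_m\le\dots\le M_0$ and $M_{i+1}=\ker\mathcal{M}_i$ give
\[
\dim\Delta^{(n,m)}\Sym^d\E=\dim M_m+\sum_{i=0}^{m-1}\dim\im\mathcal{M}_i\ \ge\ \sum_{i=0}^{m-1}\dim\im\mathcal{M}_i\ \ge\ \sum_{i=0}^{m-1}\dim L_i,
\]
where the last inequality uses $\dim\im\mathcal{M}_i\ge\dim\tilde\pi(L_i)=\dim L_i$. Second, we compute both ends. Dimensions are field-independent: a Weyl module $\Delta^\lambda V$ has basis indexed by semistandard tableaux, and the dimensions of $\Sym_{n+m}\Sym^{d-k}\E$ are likewise field-independent. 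So we may evaluate at $q=1$ in the character identity~\eqref{eq:main}. Writing $k=m-i$, this gives
\[
\dim\Delta^{(n,m)}\Sym^d\E=\sum_{k=1}^m\binom{d+n+m-k}{n+m}\dim\Delta^{(n-k,m-k)}\Sym^k\E=\sum_{i=0}^{m-1}\dim L_i.
\]
Third, all the inequalities in the chain are therefore equalities. Hence $M_m=0$, and $\dim\im\mathcal{M}_i=\dim\tilde\pi(L_i)$ for every $i$. Combined with the containment $\tilde\pi(L_i)\subseteq\im\mathcal{M}_i$, this forces $\im\mathcal{M}_i=\tilde\pi(L_i)$.

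The main obstacle is justifying~\eqref{eq:main} at $q=1$ independently of the quantum-group derivation. We take it from~\eqref{Rel2}, as specialised in the introduction, via~\eqref{eq:twoRowPlethysm} and the $q$-character interpretations in \S\ref{subsec:plethysmCharacters}. Care is needed because the sum in the introduction is indexed by $k\ge0$, whereas~\eqref{eq:main} starts at $k=1$. The $k=0$ term must vanish: with $k=0$, $\binom{m}{0}\binom{n}{0}-\binom{m-1}{0}\binom{n+1}{0}=0$. Also, the terms with $k>m$ have $\binom{m}{k}=\binom{m-1}{k}=0$ and so contribute nothing. This confirms the index range $1\le k\le m$, i.e.\ $0\le i\le m-1$.

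A subsidiary check is that $\tilde\pi^{-1}\circ\mathcal{M}_i$ is equivariant. This holds because $\mathcal{M}_i$ is a restriction of $\overline{\mathcal{M}}_i$, which is equivariant, and $\tilde\pi$ is an equivariant isomorphism onto its image.
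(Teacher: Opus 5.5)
Your proposal is correct and follows the paper's proof essentially step for step: the containment $\tilde\pi(L_i)=\mathcal{M}_i(S_i)\subseteq\im\mathcal{M}_i$ from Lemma~\ref{lem:phi-surj-in-thm1.1}, rank--nullity telescoped along the chain $M_m\le\dots\le M_0$, and the $q=1$ specialisation of \eqref{eq:main} forcing every inequality to be an equality, which gives $M_m=0$ and $\im\mathcal{M}_i=\tilde\pi(L_i)$. Your checks on the summation range and on the field-independence of dimensions fill in details the paper leaves implicit, and stating the conclusion as an equality of subspaces (rather than only $\im\mathcal{M}_i\cong L_i$) is what makes $\tilde\pi^{-1}\circ\mathcal{M}_i$ well defined.
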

\begin{proof}
    In the previous section we showed that
    \begin{equation}\label{eq:containment}
    L_i\cong\tilde\pi(L_i)=\mathcal{M}_i(S_i)\leq \im\mathcal{M}_i
    \end{equation}
    as $\F$-vector spaces.
    Hence, by definition of $M_{i+1}$ as the kernel of $\mathcal{M}_i$ and by rank-nullity,
    \[
    \dim L_i\leq \dim(\im\mathcal{M}_i)=\dim M_i-\dim M_{i+1}
    \]
    for $i=0,1,\ldots,m-1$.
    Summing these inequalities, we get
    \[
    \sum_{i=0}^{m-1}\dim L_i\leq \dim M_0-\dim M_m\le\dim M_0\,.
    \]
    Recall that $M_0=\Delta^{(n,m)}\Sym^d \E$ by definition, and by equation~\eqref{eq:main} with $q=1$, $\sum\dim L_i=\dim\Delta^{(n,m)}\Sym^d \E$, so every inequality above is in fact an equality, in particular $\dim L_i=\dim (\im\mathcal{M}_i)$ and also $\dim M_m=0$. Together with the relation~\eqref{eq:containment} this gives $\im\mathcal{M}_i~\cong~L_i$, now as $\SL_2(\F)$-modules because $\tilde \pi$ and $\mathcal{M}_i$ are $\SL_2(\F)$-homomorphisms.
\end{proof}

\begin{proof}[Proof of Theorem~\ref{thm:main}]
    By Proposition~\ref{prop:surj-Li}, for $i=0,1,\ldots,m-1$ there is a short exact sequence
    \[
    0\to M_{i+1}\to M_i\xrightarrow{\tilde \pi^{-1}\circ\mathcal{M}_i} L_i \to 0,
    \]
    which proves the claimed filtration of $\Delta^{(n,m)}\Sym^d \E$.
\end{proof}

As a corollary of our main result, we get the following structure theorem.
\begin{Theorem}
	There is a long exact sequence of $\SL_2(\F)$-modules 
	\[
	\cdots
	\to \Delta^{(2,2)}\Sym^{d}\E 
	\to \Delta^{(2,2)}\Sym^{d-1}\E
	\to \cdots \to 
	\Delta^{(2,2)}\Sym^{2}\E \to \F.
	\]
\end{Theorem}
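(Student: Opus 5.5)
The plan is to specialise Theorem~\ref{thm:main} to $n=m=2$ and splice the resulting two-step filtrations for consecutive values of $d$. With $n=m=2$ and $d\ge 2$ the filtration has two layers.
\begin{itemize}
\item The top layer ($i=0$, $k=2$, $\tilde n=0$) is $\Sym_4\Sym^{d-2}\E\otimes\Delta^{(0)}\Sym^2\E$.
\item The bottom layer ($i=1$, $k=1$, $\tilde n=1$) is $\Sym_4\Sym^{d-1}\E\otimes\Delta^{(1,1)}\Sym^1\E$.
\end{itemize}
The functor $\Delta^{(0)}$ is the trivial functor with value $\F$. Also $\Delta^{(1,1)}\Sym^1\E=\bwedge{2}\E$ is the determinant representation, which is trivial for $\SL_2(\F)$. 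So for each $d\ge 2$ Theorem~\ref{thm:main} (in the form of Proposition~\ref{prop:surj-Li}) gives a short exact sequence of $\SL_2(\F)$-modules
\[
0\to \Sym_4\Sym^{d-1}\E \xrightarrow{\iota_d} \Delta^{(2,2)}\Sym^d\E \xrightarrow{p_d} \Sym_4\Sym^{d-2}\E\to 0 .
\]
Here $\iota_d$ is the inclusion of $M_1$ and $p_d=\tilde\pi^{-1}\circ\mathcal{M}_0$.

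Next I define $f_d=\iota_{d-1}\circ p_d\colon \Delta^{(2,2)}\Sym^d\E\to\Delta^{(2,2)}\Sym^{d-1}\E$ for $d\ge 3$. This is the composite of the surjection for $d$ with the injection for $d-1$, since the top layer for $d$, namely $\Sym_4\Sym^{d-2}\E$, coincides with the bottom layer for $d-1$. The final map $\Delta^{(2,2)}\Sym^2\E\to\F$ is $p_2$, which is possible because $\Sym_4\Sym^0\E\cong\F$.

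Exactness at $\Delta^{(2,2)}\Sym^{d}\E$ for $d\ge 3$ is a standard splicing check. We have $\im f_{d+1}=\iota_d(\Sym_4\Sym^{d-1}\E)$, because $p_{d+1}$ is surjective. We also have $\ker f_d=\ker p_d$, because $\iota_{d-1}$ is injective. These two agree by exactness of the short sequence for $d$. The same argument at $d=2$ gives $\ker p_2=\im\iota_2=\im f_3$. Moreover $p_2$ is surjective onto $\F$, so the sequence may even be extended by $\to 0$ on the right.

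No step is a real obstacle. The only points needing care are the identifications of $\Delta^{(0)}\Sym^2\E$ and $\Delta^{(1,1)}\Sym^1\E$ with the trivial module $\F$, and the hypothesis $m\le d$ of Theorem~\ref{thm:main}. The latter forces $d\ge 2$, which is exactly why the sequence terminates at $\Delta^{(2,2)}\Sym^2\E\to\F$.
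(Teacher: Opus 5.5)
Your proposal is correct and follows the paper's proof. Like the paper, you specialise Theorem~\ref{thm:main} to $n=m=2$ to get short exact sequences $0\to\Sym_4\Sym^{d-1}\E\to\Delta^{(2,2)}\Sym^d\E\to\Sym_4\Sym^{d-2}\E\to 0$ for $d\ge 2$ and splice consecutive ones. Your explicit composites $f_d=\iota_{d-1}\circ p_d$, with the surjection identified as $p_d=\tilde\pi^{-1}\circ\mathcal{M}_0$, make precise the splicing that the paper's short argument leaves implicit.
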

\begin{proof}
	Theorem~\ref{thm:main} with $n=m=2$ gives a short exact sequence
	\[
	0\to\Sym_4\Sym^{d-1}\E \to \Delta^{(2,2)}\Sym^d\E \xrightarrow{\overline{\mathcal{M}}_1^{(d)}}
	\Sym_4\Sym^{d-2}\E \to 0\,
	\]
	for every $d\ge 2$. In particular \[\ker\overline{\mathcal{M}}_1^{(d)}=\Sym_4\Sym^{d-1} E=\im\overline{\mathcal{M}}_1^{(d+1)},\] so the homomorphisms $\overline{\mathcal{M}}_1^{(d)}$ for $d\ge2$ form the long exact sequence above.
\end{proof}

\section*{Acknowledgements}
The authors thank Eoghan McDowell for helpful discussions on his construction of Weyl modules.
    
    \newcommand{\etalchar}[1]{$^{#1}$}


\begin{thebibliography}{GMSW26b}

\bibitem[AFP{\etalchar{+}}19]{AFPRW}
M.~Aprodu, G.~Farkas, {\c{S}}.~Papadima, C.~Raicu, and J.~Weyman.
\newblock Koszul modules and {G}reen’s conjecture.
\newblock {\em Inventiones mathematicae}, 218(3):657--720, 2019.

\bibitem[ALRS13]{ALRS}
F.~Ardila, E.~Le{\'o}n, M.~Rosas, and M.~Skandera.
\newblock Tres lecciones en combinatoria algebraica. {II}. {L}as funciones
  sim\'etricas y la teor\'ia de representaciones.
\newblock 2013.
\newblock arXiv:1301.3988.

\bibitem[BB05]{BjornerBrenti}
A.~Bjorner and F.~Brenti.
\newblock {\em Combinatorics of {C}oxeter groups}.
\newblock Springer, 2005.

\bibitem[BFK99]{BFK}
J.~Bernstein, I.~Frenkel, and M.~Khovanov.
\newblock A categorification of the {T}emperley--{L}ieb algebra and {S}chur
  quotients of ${U}(\mathfrak{sl}_2)$ via projective and {Z}uckerman functors.
\newblock {\em Selecta Mathematica}, 5:199--241, 1999.

\bibitem[CF94]{CraneFrenkel}
L.~Crane and I.~B. Frenkel.
\newblock Four-dimensional topological quantum field theory, {H}opf categories,
  and the canonical bases.
\newblock {\em Journal of Mathematical Physics}, 35(10):5136--5154, 1994.

\bibitem[FGGK26]{AffineJT}
I.~Fischer, M.~Gangl, {\'A}.~Gutiérrez, and N.~Kumari.
\newblock Affine {J}acobi--{T}rudi formulas.
\newblock In preparation, 2026+.

\bibitem[FKS07]{FKS}
I.~Frenkel, M.~Khovanov, and C.~Stroppel.
\newblock A categorification of finite-dimensional irreducible representations
  of quantum $\mathrm{sl}_2$ and their tensor products.
\newblock {\em Selecta Mathematica}, 12(3):379, 2007.

\bibitem[Ful97]{FultonYT}
William Fulton.
\newblock {\em Young tableaux}, volume~35 of {\em London {M}athematical
  {S}ociety student texts}.
\newblock CUP, 1997.

\bibitem[GK25]{GutKrat}
{\'A}.~Gutiérrez and C.~Krattenthaler.
\newblock Schur log-concavity and the quantum {P}ascal triangle.
\newblock arXiv:2509.22648, 2025.

\bibitem[GMSW25]{hooks}
Á. Gutiérrez, {\'A}.~Martínez, M.~Szwej, and M.~Wildon.
\newblock Modular isomorphisms of $\mathrm{SL}_2(\mathbb{F})$-plethysms for
  {W}eyl modules labelled by hook partitions.
\newblock arXiv:2509.01490, 2025.

\bibitem[GMSW26a]{GSMW-Pfaff}
{\'A}.~Gutiérrez, {\'A}.~Martínez, M.~Szwej, and M.~Wildon.
\newblock A new bijective proof of the $q$-{P}faff--{S}aalsch\"utz identity
  with applications to quantum groups.
\newblock {\em European Journal of Combinatorics}, 133:104321, 2026.

\bibitem[GMSW26b]{GMSW-FPSAC}
{\'A}.~Gutiérrez, {\'A}.~Martínez, M.~Szwej, and M.~Wildon.
\newblock Plethystic lifts of $q$-binomial identities.
\newblock {\em To appear in the Proceedings of FPSAC, S\'eminaire Lotharingien
  Combinatoire}, 2026.

\bibitem[Gri23]{Grinberg}
D.~Grinberg.
\newblock Comments on arxiv:2105.00538v3.
\newblock Available at
  \url{https://www.cip.ifi.lmu.de/~grinberg/algebra/sln-equiv.pdf}, 2023.

\bibitem[IOT25]{IOT}
C.~Ikenmeyer, H.~Omar, and D.~Tsintsilidas.
\newblock Field-independent {K}ronecker-plethysm isomorphisms, 2025.
\newblock arXiv:2509.10069.
\bibitem[KL09]{KhovanovLauda}
M. Khovanov and A. D. Lauda.
\newblock A diagrammatic approach to categorification of quantum groups {I}
\newblock {\em Representation Theory of the American Mathematical Society}, 13(14),
2009.

\bibitem[Lau10]{Lauda}
A.~D. Lauda.
\newblock A categorification of quantum {${\rm sl}(2)$}.
\newblock {\em Advances in Mathematics}, 225(6), 2010.

\bibitem[LR11]{LoehrRemmel}
Nicholas~A. Loehr and Jeffrey~B. Remmel.
\newblock A computational and combinatorial expos\'e of plethystic calculus.
\newblock {\em J. Algebraic Combin.}, 33(2):163--198, 2011.

\bibitem[Mac95]{Macdonald}
I.~G. Macdonald.
\newblock {\em Symmetric functions and {H}all polynomials}.
\newblock Oxford Mathematical Monographs. The Clarendon Press Oxford University
  Press, New York, second edition, 1995.
\newblock With contributions by A. Zelevinsky, Oxford Science Publications.

\bibitem[McD25]{McDowell25}
E.~McDowell.
\newblock An explicit construction of the {W}eyl module as a quotient of
  symmetric tensors by dual {G}arnir relations.
\newblock arXiv:2508.14788, 2025.

\bibitem[McDW22]{McDW}
E.~McDowell and M.~Wildon.
\newblock Modular plethystic isomorphisms for two-dimensional linear groups.
\newblock {\em Journal of Algebra}, 602, 2022.

\bibitem[MW25]{MW}
{\'A}.~Martínez and M.~Wildon.
\newblock A new modular plethystic {${\rm SL}_2(\mathbb{F})$}-isomorphism {${\rm
  Sym}^{N - 1}E\otimes \wedge ^{N + 1}{\rm Sym}^{d + 1}E\cong \Delta^{( 2, 1^{N
  - 1} )}{\rm Sym}^dE$}.
\newblock {\em Journal of Algebra}, 682, 2025.

\bibitem[Rou08]{Rouquier}
R. Rouquier.
\newblock 2-{K}ac-{M}oody algebras.
\newblock
arxiv:0812.5023, 2008.

\bibitem[Sta99]{StanleyEC2}
R.~P. Stanley.
\newblock {\em Enumerative combinatorics Vol. 2}.
\newblock Cambridge Studies in Adv. Math., 1999.

\bibitem[Str05]{Stroppel}
C.~Stroppel.
\newblock Categorification of the {T}emperley--{L}ieb algebra, tangles and
  cobordisms via projective functors.
\newblock {\em Duke Mathematical Journal}, 126(3):547--596, 2005.

\bibitem[Wey03]{Wey03}
J.~Weyman.
\newblock {\em Cohomology of Vector Bundles and Syzygies}.
\newblock Cambridge Tracts in Mathematics. Cambridge University Press, 2003.

\end{thebibliography}
\end{document}